\DeclareMathAlphabet{\matheurm}{U}{eur}{m}{n}
\DeclareMathOperator{\ANR}{ANR}
\DeclareMathOperator{\id}{id}
\DeclareMathOperator{\inti}{int}
\DeclareMathOperator{\PL}{PL}
\DeclareMathOperator{\Wh}{Wh}
  \newcommand{\IH}{\mathbb{H}}
  \newcommand{\IR}{\mathbb{R}}
  \newcommand{\IS}{\mathbb{S}}
  \newcommand{\IZ}{\mathbb{Z}}
  \newcommand{\calh}{\mathcal{H}}
  \newcommand{\calo}{\mathcal{O}}
  \newcommand{\calu}{\mathcal{U}}
  \newcommand{\bfK}{{\mathbf K}}
  \newcommand{\bfL}{{\mathbf L}}
\newcommand{\xycomsquare}[8]   
{\xymatrix{#1 \ar[r]^{#2} \ar[d]^{#4} &
#3 \ar[d]^{#5}  \\
#6\ar[r]^{#7} &
#8
}
}
\newcommand{\xycomsquareminus}[8] 
{\xymatrix{#1 \ar[r]^-{#2} \ar[d]^-{#4} &
#3 \ar[d]^-{#5}  \\
#6\ar[r]^-{#7} &
#8
}
}
\newcounter{commentcounter}
\theoremstyle{plain}
\newtheorem{theorem}{Theorem}[section]
\newtheorem{lemma}[theorem]{Lemma}
\newtheorem{corollary}[theorem]{Corollary}
\newtheorem{proposition}[theorem]{Proposition}
\newtheorem*{theorem*}{Theorem}
\newtheorem*{theoremA*}{Theorem A}
\newtheorem*{theoremB*}{Theorem B}
\theoremstyle{definition}
\newtheorem{definition}[theorem]{Definition}
\newtheorem{example}[theorem]{Example}
\theoremstyle{remark}
\let\c@equation=\c@theorem\makeatother
\newcommand{\x}{{\times}}
\newcommand{\dd}{{\partial}}
\newcommand{\e}{{\varepsilon}}
\title{On hyperbolic groups with spheres as boundary}
           \author{Arthur Bartels}
            \email{bartelsa@math.uni-muenster.de}
          \urladdr{http://www.math.uni-muenster.de/u/bartelsa}
          \address{Westf\"alische Wilhelms-Universit\"at M\"unster\\
                   Mathematisches Institut\\
                   Einsteinstr.~62,
                   D-48149 M\"unster, Germany}
           \author{Wolfgang L\"uck}
            \email{lueck@math.uni-muenster.de}
          \urladdr{http://www.math.uni-muenster.de/u/lueck}
          \address{Westf\"alische Wilhelms-Universit\"at M\"unster\\
                   Mathematisches Institut\\
                   Einsteinstr.~62,
                   D-48149 M\"unster, Germany}
           \author{Shmuel Weinberger}
            \email{shmuel@math.uchicago.edu}
          \urladdr{http://www.math.uchicago.edu/\%7Eshmuel/}
          \address{Department of Mathematics\\
                   University of Chicago\\
                   5734 S. University Avenue
                   Chicago, IL 60637-151, U.S.A.}
             \date{November 2009}
       \dedicatory{Dedicated to Steve Ferry on the occasion of his 
                     60th birthday}
         \keywords{Hyperbolic groups, boundary, 
                   closed apsherical manifolds}
        \subjclass[2000]{20F67,57N99}
\begin{document}

\begin{abstract}
Let $G$ be a torsion-free hyperbolic group and let $n \ge 6$ be an
integer. 
We prove that $G$ is the fundamental group of
a closed aspherical manifold if the boundary of 
$G$ is homeomorphic to an $(n-1)$-dimensional  sphere.  
\end{abstract}

\maketitle


\typeout{------- Introduction ------}

\section*{Introduction}

If $G$ is the fundamental group of an $n$-dimensional 
closed Riemannian manifold with negative sectional curvature, 
then $G$ is a hyperbolic group in the sense of Gromov  (see for
instance~\cite{Bowditch(1991)}, 
\cite{Bridson-Haefliger(1999)},
\cite{Ghys-Harpe(1990)}, \cite{Gromov(1987)}).
Moreover such a group is torsion-free and its
boundary $\partial G$ is homeomorphic to
a sphere.
This leads to the natural question
whether a torsion-free hyperbolic group with a 
sphere as boundary occurs as
fundamental group of a closed aspherical manifold
(see Gromov~\cite[page~192]{Gromov(1993)}). 
We settle this question if the dimension of the sphere is
at least $5$.

\begin{theoremA*}
Let $G$ be a torsion-free hyperbolic group and let $n$ be an 
integer  $\geq 6$. The following statements are equivalent:
\begin{enumerate}
  \item \label{the:main:sphere}
        the boundary $\dd G$ is homeomorphic to $S^{n-1}$;
  \item \label{the:main:manifold}
        there is a closed aspherical topological manifold $M$ 
        such that $G \cong \pi_1(M)$, 
        its universal covering $\widetilde{M}$ is 
        homeomorphic to $\IR^n$
        and the compactification of $\widetilde{M}$ by 
        $\partial G$ is homeomorphic to $D^n$;
\end{enumerate}
\end{theoremA*}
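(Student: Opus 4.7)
The direction (ii)$\Rightarrow$(i) is immediate: the compactification $\widetilde{M} \cup \partial G \cong D^n$ identifies $\partial G$ with $S^{n-1} = \partial D^n$. For the substantial direction (i)$\Rightarrow$(ii), my plan proceeds in three stages.

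\emph{Stage 1 (Poincar\'e duality).} By the Bestvina--Mess theorem, the Rips complex of the torsion-free hyperbolic group $G$ admits a $Z$-set compactification by $\partial G$, and the shifted reduced cohomology of $\partial G$ computes the cohomology of $G$ with $\IZ G$-coefficients. The hypothesis $\partial G \cong S^{n-1}$ therefore forces $G$ to be an orientable Poincar\'e duality group of dimension $n$; in particular there is a finite $n$-dimensional $CW$-model $X$ for $BG$ which is a Poincar\'e complex.

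\emph{Stage 2 (From Poincar\'e complex to aspherical manifold).} Since $n \geq 6$, Ranicki's total surgery obstruction $s(X) \in \IS_n(X)$ is the precise obstruction to $X$ being homotopy equivalent to a closed topological manifold, and it fits into the Ranicki--Sullivan exact sequence governed by the $L$-theoretic assembly map $H_n(BG;\IL\langle 1\rangle)\to L_n(\IZ G)$. The $K$- and $L$-theoretic Farrell--Jones conjectures are known for hyperbolic groups by Bartels--L\"uck--Reich; together with the vanishing of the Quinn resolution obstruction, this input forces $s(X)=0$ and produces a closed aspherical topological manifold $M$ homotopy equivalent to $X$, whence $\pi_1(M) \cong G$.

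\emph{Stage 3 (Identification of the compactification).} The universal cover $\widetilde{M}$ is then a contractible open $n$-manifold that is $G$-equivariantly homotopy equivalent to the Rips complex of $G$; transporting the Bestvina--Mess $Z$-set compactification across this equivalence equips $\widetilde{M}$ with a natural compactification by $\partial G \cong S^{n-1}$. The crux of the theorem---and the place I expect the principal technical difficulty to lie---is to upgrade this to a \emph{homeomorphism} $\widetilde{M} \cup \partial G \cong D^n$. For this I would appeal to a characterization of $D^n$ among compact ANRs (in the spirit of Edwards' recognition theorem) that requires $\partial G$ to sit as a $Z$-set, the interior to be a genuine $n$-manifold, and the Disjoint Disks Property to hold along $\partial G$. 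The first two conditions are supplied by Stages 1 and 2; verifying DDP at points of $\partial G$ is the delicate step, which I would carry out by controlled topology arguments (in the spirit of the Chapman--Ferry $\alpha$-approximation theorem, again fed by the Farrell--Jones input for $G$) that push small disks in $\widetilde{M}$ off each other in neighborhoods of the boundary sphere. Once $\widetilde{M}\cup \partial G \cong D^n$ is established, the interior is automatically $\IR^n$.
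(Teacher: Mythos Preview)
Your outline has a genuine gap in Stage~2, and it is precisely the heart of the theorem. The Farrell--Jones input, fed into Ranicki's machinery, does \emph{not} kill the ordinary total surgery obstruction $s(X)\in\IS_n(X)$; it kills the $4$-periodic version $\overline{s}(X)\in\overline{\IS}_n(X)$. The difference between the two structure groups is governed by $L_0(\IZ)\cong\IZ$, and the image of $\overline{s}$ under this comparison is exactly the Quinn resolution obstruction. So from Farrell--Jones alone you obtain only a closed aspherical \emph{homology $\ANR$-manifold} $N$ (with the DDP), not a topological manifold. Your phrase ``together with the vanishing of the Quinn resolution obstruction'' simply assumes what must be proved; nothing in Stage~1 or Stage~2 supplies it.

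The paper's argument is organized the other way around, and that reordering is the key idea. One first produces the homology $\ANR$-manifold $N$, then compactifies its universal cover $\widetilde{N}$ by $\partial G$ and checks (via the $Z$-set criterion) that the result is a homology $\ANR$-manifold \emph{with boundary} $\partial G\cong S^{n-1}$. Since the boundary is a genuine manifold, locality of Quinn's invariant forces $\iota(\widetilde{N})=1$, hence $\iota(N)=1$, and Edwards--Quinn now upgrades $N$ to a topological manifold. Only after this does one return to the compactification: with $\widetilde{N}$ now an honest manifold, the Ferry--Seebeck theorem (using that $\partial G$ is $(n-1)$-LCC, a consequence of the $Z$-set property) shows $\overline{\widetilde{N}}$ is a manifold with boundary $S^{n-1}$, and contractibility plus the $h$-cobordism theorem identify it with $D^n$. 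Your Stage~3 speculation about DDP at boundary points and $\alpha$-approximation is not the mechanism; the LCC condition plus Ferry--Seebeck is what is actually used, and it is comparatively soft once the Quinn obstruction has been handled.
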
  

The aspherical manifold $M$ appearing in our result
is unique up to homeomorphism.
This is a consequence of the validity of the Borel Conjecture
for hyperbolic groups~\cite{Bartels-Lueck(2009borelhyp)},
see also Section~\ref{sec:rigidity}.   

The proof depends on the surgery theory for
homology $\ANR$-manifolds due to Bryant-Ferry-Mio-Weinberger~%
\cite{Bryant-Ferry-Mio-Weinberger(1996)}
and the validity of the $K$- and $L$-theoretic Farrell-Jones Conjecture for
hyperbolic groups due to
Bartels-Reich-L\"uck~\cite{Bartels-Lueck-Reich(2008hyper)}
and Bartels-L\"uck~\cite{Bartels-Lueck(2009borelhyp)}. 
It seems likely that this result holds also if $n = 5$.
Our methods can be extended to this case 
if the surgery theory from~\cite{Bryant-Ferry-Mio-Weinberger(1996)} 
can be extended to the case of $5$-dimensional
homology $\ANR$-manifolds -- such an extension has been announced by
Ferry-Johnston.  We also hope to give a treatment elsewhere by more
algebraic methods.

We do not get information in dimensions $n \le 4$
for the usual problems about surgery. 
For instance, our methods
give no information in the case, where the boundary is homeomorphic to 
$S^3$, since virtually cyclic groups are the only hyperbolic groups
which are known to be good in the sense of 
Friedman~\cite{Freedman(1983)}. 
In the case $n = 3$ 
there is the conjecture of Cannon~\cite{Cannon(1991)} that
a group $G$ acts properly, isometrically and cocompactly on the
$3$-dimensional hyperbolic plane $\IH^3$ if and only if it is a
hyperbolic group whose boundary is homeomorphic to $S^2$. 
Provided that the infinite hyperbolic group $G$ occurs as the 
fundamental group of a closed irreducible $3$-manifold,
Bestvina-Mess~\cite[Theorem~4.1]{Bestvina-Mess(1991)} 
have shown that its universal cover is homeomorphic to 
$\IR^3$ and its compactification by $\partial G$ is 
homeomorphic to $D^3$, and the Geometrization Conjecture of
Thurston implies that $M$ is hyperbolic and 
$G$ satisfies Cannon's conjecture.
The problem is solved in the case $n = 2$, 
essentially as a consequence of Eckmann's theorem that 2
dimensional Poincare duality groups are surface groups
(see \cite{Eckmann(1987)}).
Namely, 
for a hyperbolic group $G$
its boundary $\partial G$ is homeomorphic to $S^1$ if and only if
$G$ is a Fuchsian group 
(see~\cite{Casson-Jungreis(1994)}, \cite{Freden(1995)},
\cite{Gabai(1991)}). 

In general the boundary of a hyperbolic group is not locally a 
Euclidean space but has a fractal behavior. 
If the boundary $\partial G$ of an infinite
hyperbolic group $G$ contains an open subset homeomorphic to 
Euclidean $n$-space, then it is homeomorphic to $S^n$.
This is proved in~\cite[Theorem~4.4]{Kapovich+Benakli(2002)},
where more information about the boundaries of hyperbolic groups can
be found.

We also prove the following result.

\begin{theoremB*}
  Let $G$ and $H$ be a torsion-free hyperbolic groups such that
  $\dd G \cong \dd H$.
  Then $G$ can be realized as the fundamental group of a closed 
  aspherical manifold of dimension at least $6$ if and only if 
  $H$ can be realized as the fundamental group of such a manifold.

  Moreover, even in case that
  neither can be realized by a closed aspherical manifold, they can both
  be realized by closed  aspherical homology $\ANR$-manifolds, which both
  have the same Quinn obstruction~\cite{Quinn(1987_resolution)} 
  (see Theorem~\ref{thm:Quinn-obstruction} for a review of
  this notion).
\end{theoremB*}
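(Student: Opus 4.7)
The plan is to reduce Theorem B to the following claim: every torsion-free hyperbolic group $G$ is realized as the fundamental group of a closed aspherical homology $\ANR$-manifold $M_G$ of dimension at least six, and the Quinn obstruction $\iota(M_G) \in 1 + 8\IZ$ depends only on the homeomorphism type of $\partial G$. Granting this, the ``moreover'' clause of Theorem B is immediate, and the main equivalence follows from Quinn's resolution theorem: in dimensions at least five, a homology $\ANR$-manifold is homotopy equivalent to a closed topological manifold if and only if its Quinn obstruction is trivial.

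First I would revisit the proof of Theorem A and extract from it a general realization statement. The ingredients used there are the Bestvina-Mess $Z$-set compactification of a Rips complex for $G$ by $\partial G$, the controlled surgery theory for homology $\ANR$-manifolds of \cite{Bryant-Ferry-Mio-Weinberger(1996)}, and the validity of the $K$- and $L$-theoretic Farrell-Jones Conjecture for hyperbolic groups established in \cite{Bartels-Lueck-Reich(2008hyper)} and \cite{Bartels-Lueck(2009borelhyp)}. The hypothesis that $\partial G$ is a sphere is used only to force the Quinn obstruction of the resulting $\ANR$-manifold to be trivial; dropping it still produces, in every dimension at least six, a closed aspherical homology $\ANR$-manifold $M_G$ with $\pi_1(M_G) \cong G$ whose compactification by $\partial G$ is appropriately modelled on a disk.

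Second, I would show that $\iota(M_G)$ is determined by the topological type of $\partial G$. Quinn's invariant is locally constant and is detected by the local index at a single point; since the local behavior of $M_G$ near a boundary point is governed, via the Bestvina-Mess $Z$-compactification, by the local topology of $\partial G$ there, a homeomorphism $\partial G \cong \partial H$ transports the local indices and yields $\iota(M_G) = \iota(M_H)$. The hardest step will be this naturality assertion: the BFMW surgery machine depends on many intermediate choices, and one must check that any two runs of it produce $\ANR$-manifolds with the same local indices whenever the boundary data at infinity are homeomorphic. Once this is in place, both conclusions of Theorem B follow: either both Quinn obstructions are trivial, in which case both groups are realized by closed aspherical topological manifolds of the given dimension, or neither is, in which case they are realized only by non-resolvable closed aspherical homology $\ANR$-manifolds sharing the same Quinn invariant.
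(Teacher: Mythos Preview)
Your outline has the right shape but contains two genuine gaps.

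First, your reduction claim is false as stated: it is not true that \emph{every} torsion-free hyperbolic group $G$ is the fundamental group of a closed aspherical homology $\ANR$-manifold. For that, $G$ must be a Poincar\'e duality group, equivalently (by Bestvina--Mess) $\dd G$ must have the integral \v{C}ech cohomology of a sphere. A free group, for instance, is torsion-free hyperbolic with Cantor-set boundary and is certainly not realized by any closed aspherical homology $\ANR$-manifold. In the proof of Theorem~A the sphere hypothesis on $\dd G$ is used twice: once (via its \v{C}ech cohomology) to get Poincar\'e duality and hence the homology $\ANR$-manifold from the BFMW machine, and a second time (as an honest manifold boundary) to force $\iota = 1$. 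You may drop the second use, but not the first. The correct reduction is: if $\dd G$ has the \v{C}ech cohomology of $S^{n-1}$ with $n \ge 6$, then $G$ is realized by a closed aspherical homology $\ANR$-manifold, and $\iota$ of that manifold depends only on $\dd G$. The hypothesis of Theorem~B supplies this, since if one of $G$, $H$ is realized by a closed aspherical (homology $\ANR$-)manifold then $\dd G \cong \dd H$ has spherical \v{C}ech cohomology.

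Second, and more seriously, your proposed argument that $\iota(M_G)$ depends only on $\dd G$ does not work. You suggest reading off the local index ``near a boundary point,'' but points of $\dd G$ lie in the $Z$-set, not in the homology $\ANR$-manifold itself, so Quinn's local index is simply not defined there; there is nothing to transport. The paper's mechanism is different and is a real idea you are missing: one takes the universal covers $M$, $M'$ of homology $\ANR$-manifolds realizing $G$ and $H$, compactifies each by $\dd G \cong \dd H$ as a $Z$-set, and \emph{glues} $\overline{M} \cup_{\dd G} \overline{M'}$ along the common boundary. One then checks (using the $Z$-set condition and results of Ancel--Guilbault) that this glued space is a connected homology $\ANR$-manifold containing both $M$ and $M'$ as open subsets. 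Locality of Quinn's invariant then gives $\iota(M_G) = \iota(M) = \iota(\overline{M} \cup_{\dd G} \overline{M'}) = \iota(M') = \iota(M_H)$ in one stroke, with no need to analyze the BFMW construction or worry about choices made in it.
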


In particular, if $G$ is hyperbolic and realized as the fundamental group
of a closed aspherical manifold of dimension at least $6$, then any
torsion-free group $H$ that is quasi-isometric to $G$ can also
be realized as the fundamental group of such a manifold.
This follows from Theorem~B, because the homeomorphism type of the 
boundary of a hyperbolic group
is invariant under quasi-isometry (and so is the property of being
hyperbolic).
The attentative reader will realize that most of the content 
of Theorem~A can also be deduced from Theorem~B,
as every sphere appears as the boundary of the fundamental group
of some closed hyperbolic manifold. 

This paper was financially supported by the
Sonderforschungsbereich 478 \--- Geometrische Strukturen in der
Mathematik \---, the Max-Planck-Forschungspreis 
and the Leibniz-Preis of the second author and
NSF grant 0852227 of the third author. 

The techniques and ideas of this paper are 
very closely related to the work of Steve Ferry;
indeed his unpublished work could have been used to simplify some
parts of this work.
It is a pleasure to dedicate this paper to him on the occasion 
of his 60th birthday.


\typeout{------- Homology manifolds ------}

\section{Homology manifolds}

A topological space $X$ is called an 
\emph{absolute neighborhood retract} 
or briefly an $\ANR$ if it is normal and for every normal space $Z$, 
every closed subset $Y \subseteq Z$ and 
every (continuous) map $f \colon Y \to X$ there exists 
an open neighborhood $U$ of $Y$ in $Z$ together with an 
extension $F \colon U \to X$ of $f$ to $U$.  

\begin{definition} [Homology $\ANR$-manifold]
  \label{def:homology-ANR-manifold}
  An \emph{$n$-dimensional homology $\ANR$-manifold $X$} is an 
  absolute neighborhood retract satisfying:
  \begin{itemize}
  \item $X$ has a countable base for its topology;
  \item the topological dimension of $X$ is finite;
  \item $X$ is locally compact;
  \item for every $x \in X$ the $i$-th singular homology group 
        $H_i(X,X-\{x\})$ is trivial
        for $i \not= n$ and infinite cyclic for $i = n$. 
  \end{itemize}
\end{definition}

Notice that a normal space with a countable basis for its topology
is metrizable by the Urysohn Metrization Theorem 
(see~\cite[Theorem~4.1 in Chapter~4-4 on page~217]{Munkres(1975)})
and is separable, i.e., contains a countable dense 
subset~\cite[Theorem~4.1]{Munkres(1975)}. 
Notice furthermore that every metric space is normal
(see~\cite[Theorem~2.3 in Chapter~4-4 on page~198]{Munkres(1975)}),
and has a countable basis for its topology 
if and only if it is separable
(see~\cite[Theorem~1.3 in Chapter~4-1 on page~191 and 
         Exercise~7 in Chapter~4-1 on page~194]{Munkres(1975)}).
Hence a homology $\ANR$-manifold in the sense of 
Definition~\ref{def:homology-ANR-manifold}
is the same as a generalized manifold in the sense of
Daverman~\cite[page~191]{Daverman(1986)}. 
A closed $n$-dimensional topological manifold is an 
example of a closed $n$-dimensional homology $\ANR$-manifold
(see~\cite[Corollary 1A in V.26 page~191]{Daverman(1986)}). 
A homology $\ANR$-manifold $M$ is said to have the 
\emph{disjoint disk property (DDP)}, if for any $\e > 0$
and maps $f, g \colon D^2 \to M$, there are maps
$f', g' \colon D^2 \to M$ so that $f'$ is $\e$-close to $f$,
$g'$ is $\e$-close to $g$ and $f'(D^2) \cap g'(D^2) = \emptyset$,
see for example~\cite[page~435]{Bryant-Ferry-Mio-Weinberger(1996)}.
We recall that a \emph{Poincar\'e duality group} $G$ is a 
finitely presented group satisfying 
the following two conditions:
firstly, the $\IZ G$-module $\IZ$ (with the trivial $G$-action) admits
a resolution of finite length by finitely generated 
projective $\IZ G$-modules;
secondly, there is $n$ such that $H^i(G;\IZ G) = 0$ for $n \neq i$ and
$H^n(G;\IZ G) \cong \IZ$. 
In this case $n$ is the formal dimension of the 
Poincar\'e duality group $G$.

\begin{theorem}
  \label{thm:FJ_and_Borel-existence}
  Let $G$ be a torsion-free group.
  \begin{enumerate}

  \item \label{thm:FJ_and_Borel-existence:ex} Assume that
  \begin{itemize}
  \item the (non-connective) $K$-theory assembly map
        \[
         H_i\bigl(BG;\bfK_\IZ\bigr) \to  
            K_i(\IZ G)
        \]
        is an isomorphism for $i \leq 0$ and
        surjective for $i = 1$;
  \item the (non-connective) $L$-theory assembly map 
        \[
         H_i\bigl(BG;^w\bfL^{\langle -\infty\rangle}_\IZ\bigr) \to  
            L_i^{\langle - \infty \rangle}(\IZ G,w)
        \]
        is bijective for every $i \in \IZ$ and every 
        orientation homomorphism $w \colon G \to \{\pm 1\}$.
  \end{itemize}
  Then for $n \geq 6$ the following are equivalent:
  \begin{enumerate}
    \item \label{thm:FJ_and_Borel:duality}
          $G$ is a Poincar\'e duality group of formal dimension $n$;
    \item \label{thm:FJ_and_Borel:homology-mfd}
          there exists a closed $\ANR$-homology manifold
          $M$ homotopy equivalent to $BG$. 
          In particular, $M$ is aspherical and $\pi_1(M) \cong G$;
  \end{enumerate}
  
  \item \label{thm:FJ_and_Borel-existence:DDP} 
  If the statements in assertion~\ref{thm:FJ_and_Borel-existence:ex}
  hold, then the homology 
  $\ANR$-manifold $M$ appearing there
  can be arranged to have the DDP;

  \item \label{thm:FJ_and_Borel-existence:uniqueness}
  If the statements in assertion~\ref{thm:FJ_and_Borel-existence:ex}
  hold, then the homology 
  $\ANR$-manifold $M$ appearing there is
  unique up to $s$-cobordism of $\ANR$-homology manifolds.
\end{enumerate}
\end{theorem}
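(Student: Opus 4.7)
The plan is to apply the surgery theory for homology $\ANR$-manifolds of Bryant-Ferry-Mio-Weinberger~\cite{Bryant-Ferry-Mio-Weinberger(1996)} to $BG$, using the hypotheses on the $K$- and $L$-theoretic assembly maps to eliminate the finiteness and surgery obstructions that arise. The implication \ref{thm:FJ_and_Borel:homology-mfd}$\Rightarrow$\ref{thm:FJ_and_Borel:duality} is the easy direction: a closed $n$-dimensional homology $\ANR$-manifold is a finitely dominated Poincar\'e duality space of formal dimension $n$, so if $M \simeq BG$ is aspherical then $G$ is a Poincar\'e duality group of formal dimension $n$.

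For \ref{thm:FJ_and_Borel:duality}$\Rightarrow$\ref{thm:FJ_and_Borel:homology-mfd} I would first promote $BG$ to a finite $n$-dimensional Poincar\'e complex $X$. The $K$-theoretic hypothesis forces $\widetilde{K}_0(\IZ G) = 0$ (from the isomorphism at $i=0$) and $\Wh(G) = 0$ (from surjectivity at $i=1$ combined with the isomorphisms below); hence the Wall finiteness obstruction of any finitely dominated $K(G,1)$ vanishes, and the Poincar\'e duality hypothesis allows a finite CW model of dimension $n$, with orientation character $w\colon G \to \{\pm 1\}$. I would then feed $X$ into the BFMW homology-manifold surgery exact sequence
\begin{equation*}
  \cdots \to H_n(X;\IL) \to L_n(\IZ G, w) \to S^H(X) \to H_{n-1}(X;\IL) \to \cdots,
\end{equation*}
where $\IL$ is the $4$-periodic $L$-theory spectrum with $\pi_0(\IL) = \IZ$ used in~\cite{Bryant-Ferry-Mio-Weinberger(1996)} and $S^H(X)$ denotes the homology $\ANR$-manifold structure set modulo $s$-cobordism. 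The $L$-theoretic hypothesis, translated via Rothenberg-type comparisons (justified because the $K$-theoretic hypothesis makes the choice of decoration immaterial), shows that the total surgery obstruction of $X$ vanishes and that $S^H(X)$ consists of a single class. Existence produces a closed homology $\ANR$-manifold $M \simeq BG$, which is automatically aspherical with $\pi_1(M) \cong G$, proving~\ref{thm:FJ_and_Borel:homology-mfd}; the single-class statement gives the $s$-cobordism uniqueness~\ref{thm:FJ_and_Borel-existence:uniqueness}. Part~\ref{thm:FJ_and_Borel-existence:DDP} then follows from the refinement in~\cite{Bryant-Ferry-Mio-Weinberger(1996)} that every element of $S^H(X)$ admits a representative with the DDP.

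I expect the main obstacle to be the careful matching of the assembly map in the BFMW surgery sequence with the assembly map appearing in the hypothesis. The two pictures use closely related but formally different $L$-theory spectra: BFMW's periodic $\IL$ has $\pi_0(\IL) = \IZ$ encoding the Quinn resolution obstruction, while the hypothesis is phrased in terms of the connective $\bfL^{\langle -\infty \rangle}_\IZ$. Bridging the two requires a cofiber-sequence comparison together with control of the $\langle -\infty \rangle$ decoration, and it is here that the $K$-theoretic vanishing provided by the first hypothesis plays its essential role.
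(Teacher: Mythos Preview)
Your proposal is correct and follows essentially the same route as the paper: use the $K$-theoretic hypothesis to kill $\widetilde K_0(\IZ G)$, $\Wh(G)$ and the negative $K$-groups (so decorations are irrelevant and the finiteness obstruction vanishes), realize $BG$ as a finite Poincar\'e complex via Johnson--Wall and Wall, and then invoke the BFMW theory to identify the vanishing of the $4$-periodic total surgery obstruction with the existence of a closed homology $\ANR$-manifold homotopy equivalent to $BG$; DDP and $s$-cobordism uniqueness come from the same source. The one place where the paper is more precise than your sketch is exactly the ``expected obstacle'' you flag: the comparison between the periodic structure group $\overline{\IS}_n(BG)$ and the quadratic structure group $\IS_n(\IZ,BG)$ is made via Ranicki's \cite[Proposition~15.11(iii)]{Ranicki(1992)} together with the fact $L_{-1}(\IZ)=0$, rather than a general cofiber argument. (A small terminological slip: the spectrum $\bfL^{\langle -\infty\rangle}_\IZ$ in the hypothesis is the \emph{non}-connective one, not connective.)
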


\begin{proof}~\ref{thm:FJ_and_Borel-existence:ex}
  The assumption on the $K$-theory assembly map
  implies that $\Wh(G) = 0$, 
  $\tilde{K}_0(\IZ G) = 0$
  and $K_i(\IZ G) = 0$ for $i < 0$,
  compare~\cite[Conjecture~1.3~on~page~653 and Remark~2.5~on~page~679]
  {Lueck-Reich(2005)}.
  This implies that we can change the decoration in the
  above $L$-theory assembly map from $\langle -\infty \rangle$ to $s$
  (see~\cite[Proposition~1.5~on~page~664]{Lueck-Reich(2005)}).
  Thus the assembly map $A$ in the algebraic surgery exact
  sequence~\cite[Definition~14.6]{Ranicki(1992)} 
  (for $R = \IZ$ and $K = BG$) is an isomorphism.
  This implies in particular that the 
  quadratic structure groups $\IS_i(\IZ,BG)$
  are trivial for all $i \in \IZ$.
 
  Assume now that $G$ is a Poincar\'e duality group of dimension $n \ge 3$. 
  We conclude from
  Johnson-Wall~\cite[Theorem~1]{Johnson+Wall(1972)} that
  $BG$ is a finitely dominated $n$-dimensional Poincar\'e complex in the 
  sense of  Wall~\cite{Wall(1967)}.
  Because $\tilde K_0(\IZ G) = 0$ the finiteness obstruction
  vanishes and hence $BG$ can be realized as a finite $n$-dimensional 
  simplicial complex
  (see~\cite[Theorem~F]{Wall(1965a)}).
  We will now use Ranicki's ($4$-periodic) total surgery 
  obstruction $\overline{s}(BG) \in \overline{\IS}_n(BG)$ of
  the Poincar\'e complex $BG$,
  see~\cite[Definition~25.6]{Ranicki(1992)}. 
  The main result of \cite{Bryant-Ferry-Mio-Weinberger(1996)}
  asserts that this obstruction vanishes if and only if there
  is a closed $n$-dimensional homology $\ANR$-manifold $M$ homotopy
  equivalent to $BG$.
  The groups $\overline{\IS}_k(BG)$ arise in a $0$-connected 
  version of the algebraic surgery 
  sequence~\cite[Definition~15.10]{Ranicki(1992)}.
  It is a consequence 
  of~\cite[Proposition~15.11(iii)]{Ranicki(1992)}
  (and the fact that $L_{-1}(\IZ) = 0$)
  that $\overline{\IS}_n(BG) = \IS_n(\IZ,BG)$.
  Since $\IS_n(\IZ,BG) = 0$, we conclude $\overline{s}(BG) = 0$.
  This shows that~\ref{thm:FJ_and_Borel:duality}
  implies~\ref{thm:FJ_and_Borel:homology-mfd}.
  (In this argument we ignored that the orientation
  homomorphism $w \colon G \to \{\pm 1\}$ may be non-trivial.
  The argument however extends to this case, 
  compare~\cite[Appendix~A]{Ranicki(1992)}.)
  Homology manifolds satisfy Poincar\'e duality and
  therefore~\ref{thm:FJ_and_Borel:homology-mfd} 
  implies~\ref{thm:FJ_and_Borel:duality}. 
  \\[1mm]~\ref{thm:FJ_and_Borel-existence:DDP}
  It is explained 
  in~\cite[Section~8]{Bryant-Ferry-Mio-Weinberger(1996)}  
  that this homology manifold $M$ appearing above can be arranged to 
  have the DDP.
  (Alternatively, we could appeal 
  to~\cite{Bryant-Ferry-Mio-Weinberger(2007)} and resolve $M$
  by an $n$-dimensional homology $\ANR$-manifold with the DDP.)
  \\[1mm]~\ref{thm:FJ_and_Borel-existence:uniqueness}
  The uniqueness statement follows from
  Theorem~\ref{thm:rigidity}~\ref{the:uniqueness_ANR}.
\end{proof}

In order to replace homology $\ANR$-manifolds by topological
manifolds we will later use the following result that combines
work of Edwards and Quinn,  
see~\cite[Theorems~3 and~4 on page~288]{Daverman(1986)}, 
\cite{Quinn(1987_resolution)}).

\begin{theorem}
  \label{thm:Quinn-obstruction}
  There is an invariant $\iota(M) \in 1 + 8 \IZ$ 
  (known as the Quinn obstruction)
  for homology
  $\ANR$-manifolds with the following properties:
  \begin{enumerate}
  \item \label{thm:Quinn-obst:local}
        if $U \subset M$ is an open subset, then 
        $\iota(U) = \iota(M)$;
  \item \label{thm:Quinn-obst:manifold}
        let $M$ be a homology $\ANR$-manifold of dimension $\geq 5$.
        Then the following are equivalent
        \begin{itemize}
        \item $M$ has the DDP and $\iota(M) = 1$;
        \item $M$ is a topological manifold.
        \end{itemize}        
  \end{enumerate}
\end{theorem}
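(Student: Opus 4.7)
The plan is to package this result from Quinn's resolution theory and Edwards' cell-like approximation theorem as cited in the statement; no fundamentally new argument is required, only a careful assembly of existing results.

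First I would recall Quinn's construction of $\iota(M)$. For a connected $n$-dimensional homology $\ANR$-manifold $M$ with $n \geq 5$, one uses controlled surgery theory to assign to each point $x \in M$ a local index in $1 + 8\IZ$, and then proves --- this is the core local-to-global step in Quinn's work --- that this local index is independent of the chosen $x$. Define $\iota(M)$ to be this common value, extending componentwise in the non-connected case. By construction $\iota$ is a germ invariant: it depends only on arbitrarily small neighborhoods of any chosen point. Property (i) is then immediate: for $x \in U \subset M$ the local index at $x$ can be computed using the same small neighborhood regardless of whether one works inside $U$ or inside $M$, so $\iota(U) = \iota(M)$ on each connected component of $M$ meeting $U$.

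For property (ii) I would combine two results from the cited sources. Quinn's resolution theorem asserts that a homology $\ANR$-manifold of dimension $\geq 5$ admits a cell-like resolution by a topological manifold if and only if $\iota(M) = 1$. Edwards' cell-like approximation theorem then says that a resolvable homology $\ANR$-manifold with the DDP is itself a topological manifold, since the cell-like resolution can be approximated arbitrarily closely by homeomorphisms. Conversely, any topological manifold of dimension $\geq 5$ satisfies the DDP by a general position argument and obviously resolves itself, so $\iota = 1$. Combining these yields the stated equivalence.

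The main obstacle, were one to reprove rather than cite, lies in the two deep ingredients: Quinn's controlled $L$-theoretic argument establishing that the pointwise local index is locally constant, and Edwards' delicate decomposition-theoretic proof of cell-like approximation under DDP. These constitute the substantial work underlying the clean statement above, and for the purposes of this paper I would treat both as black boxes following the references.
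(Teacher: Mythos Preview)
Your proposal is correct and matches the paper's treatment: the paper does not prove this theorem at all but simply states it as a known result, citing Daverman~\cite[Theorems~3 and~4 on page~288]{Daverman(1986)} and Quinn~\cite{Quinn(1987_resolution)}. Your sketch of how the ingredients (Quinn's local index, his resolution theorem, and Edwards' cell-like approximation theorem) fit together is accurate and in fact more detailed than what the paper provides.
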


\begin{definition}
  \label{def:homology-mfd-with-boundary}
  An \emph{$n$-dimensional homology $\ANR$-manifold $M$ with boundary
  $\dd M$} is an absolute neighborhood retract which is a
  disjoint union $M = \inti M \cup \dd M$, where
  \begin{itemize}
  \item $\inti M$ is an $n$-dimensional homology $\ANR$-manifold;
  \item $\dd M$ is an $(n-1)$-dimensional homology $\ANR$-manifold;
  \item for every $z \in \dd M$ the singular homology group
        $H_i(M, M \setminus \{ z \})$ vanishes for all $i$.
  \end{itemize}
\end{definition}

\begin{lemma}
  \label{lem:collar-for-homology-mfd-boundary}
  If $M$ is an $n$-dimensional homology $\ANR$-manifold with
  boundary, then $\widehat{M} :=  M \cup_{\dd M} \dd M \x [0,1)$
  is an $n$-dimensional homology $\ANR$-manifold. 
\end{lemma}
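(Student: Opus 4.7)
The plan is to verify the four defining conditions of Definition~\ref{def:homology-ANR-manifold} for $\widehat{M}$. That $\widehat{M}$ is a second countable, locally compact, finite-dimensional $\ANR$ is standard: $\dd M$ is a closed $\ANR$ in $M$ and the zero-section $\dd M \hookrightarrow \dd M \x [0,1)$ is a closed cofibration, so the adjunction space inherits these properties from $M$ and $\dd M \x [0,1)$ by Hanner's theorem and routine arguments for adjunction spaces.

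The substantive content is the local homology computation $H_*(\widehat{M}, \widehat{M} - \{x\})$, for which I split into three cases by the location of $x \in \widehat{M}$. If $x$ lies in the open subset $\inti M \subset \widehat{M}$, the condition is inherited directly from $\inti M$. If $x = (z, t)$ lies in the open subset $\dd M \x (0, 1) \subset \widehat{M}$, the K\"unneth formula applied to the product pair $(\dd M, \dd M - z) \x ((0, 1), (0, 1) - t)$ concentrates the local homology in degree $n$: the $\dd M$-factor contributes $\IZ$ in degree $n-1$ since $\dd M$ is an $(n-1)$-dimensional homology $\ANR$-manifold, and the interval factor contributes $\IZ$ in degree $1$.

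The essential case is $x = z \in \dd M \subset \widehat{M}$. Here I apply the relative Mayer--Vietoris sequence for the decomposition $\widehat{M} = M \cup (\dd M \x [0,1))$, which is a pushout along the closed cofibration $\dd M \hookrightarrow \dd M \x [0, 1)$ and so gives an excisive long exact sequence
\begin{multline*}
\cdots \to H_i(\dd M, \dd M - z) \to H_i(M, M - z) \oplus H_i(\dd M \x [0,1), \dd M \x [0,1) - (z, 0)) \\ \to H_i(\widehat{M}, \widehat{M} - z) \to H_{i-1}(\dd M, \dd M - z) \to \cdots.
\end{multline*}
The group $H_i(M, M - z)$ vanishes by Definition~\ref{def:homology-mfd-with-boundary}, while $H_i(\dd M \x [0, 1), \dd M \x [0, 1) - (z, 0))$ vanishes because $R_r(x, s) := (x, (1 - r)s + r/2)$ provides a strong deformation retraction of this pair onto $(\dd M \x \{1/2\}, \dd M \x \{1/2\})$; the only check is that $R_r$ never sends a point of $\dd M \x [0, 1) - (z, 0)$ to $(z, 0)$, which is immediate since the second coordinate $(1 - r)s + r/2$ vanishes only for $r = s = 0$, where necessarily $x \neq z$. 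The sequence thus collapses to $H_i(\widehat{M}, \widehat{M} - z) \cong H_{i-1}(\dd M, \dd M - z)$, which is $\IZ$ for $i = n$ and $0$ otherwise.

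The main technical obstacle is the justification of the relative Mayer--Vietoris sequence for the closed cover $\{M, \dd M \x [0, 1)\}$ of $\widehat{M}$, since neither piece is open. This is handled by the cofibration structure: the NDR-pair $(\dd M \x [0, 1), \dd M)$ gives the excision $H_*(\widehat{M}, \dd M \x [0, 1)) \cong H_*(M, \dd M)$ (together with its analog after deleting the point $z$), which upgrades the long exact sequences of the pairs $(\widehat{M}, \dd M \x [0, 1))$ and $(M, \dd M)$ to the displayed Mayer--Vietoris sequence.
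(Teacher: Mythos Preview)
Your proof is correct and follows the same overall strategy as the paper: check the point-set conditions by standard gluing, and compute the local homology at a boundary point via Mayer--Vietoris to reduce to $H_{*-1}(\dd M,\dd M\setminus\{z\})$.

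The one genuine difference is in how the Mayer--Vietoris step is set up. You use the closed cover $\{M,\ \dd M\times[0,1)\}$ and justify excision through the cofibration structure. The paper instead replaces the point $(z,0)$ by the half-open segment $I_z=\{z\}\times[0,\tfrac12)$ (a homotopy-equivalent deletion) and then works with the \emph{open} cover
\[
U_1 = M\cup_{\dd M}\dd M\times[0,\tfrac12),\qquad U_2=\dd M\times(0,1),
\]
so that ordinary open-cover Mayer--Vietoris applies without any cofibration discussion. Your route is perfectly valid---the excision after deleting $z$ holds because a closed $\ANR$ inside an $\ANR$ is always a cofibration, so $(\dd M\setminus\{z\})\hookrightarrow(\dd M\times[0,1))\setminus\{(z,0)\}$ is again a cofibration---but the paper's thickening trick buys a cleaner argument that avoids this extra verification. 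Either way the computation collapses to the same isomorphism $H_i(\widehat{M},\widehat{M}\setminus\{(z,0)\})\cong H_{i-1}(\dd M,\dd M\setminus\{z\})$.
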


\begin{proof}
  Suppose that $Y$ is the union of two closed subsets 
  $Y_1$ and $Y_2$ and set $Y_0 := Y_1 \cap Y_2$. 
  If $Y_0$, $Y_1$ and $Y_2$ are $\ANR$s, then $Y$
  is an $\ANR$, see~\cite[Theorem~7 on page~117]{Daverman(1986)}.
  If $Y_1$ and $Y_2$ have countable bases $\calu_1$ and
  $\calu_2$ of the topology, then sets
  $U_1 \setminus Y_2$ with $U_1 \in \calu_1$,
  $U_2 \setminus Y_1$ with $U_2 \in \calu_2$
  and $(U_1 \cup U_2)^\circ$ with $U_i \in \calu_i$
  form a countable basis of the topology of $Y$.
  (Here $( \; )^\circ$ is the operation of taking the
  interior in $Y$.)
  If $Y_1$ and $Y_2$ are both finite dimensional,
  then $Y$ is finite 
  dimensional~\cite[Theorem~9.2 on page~303]{Munkres(1975)}.
  If $Y_1$ and $Y_2$ are both locally compact, then 
  $Y$ is locally compact.

  Thus the only non-trivial requirement 
  is that for 
  $x = (z,0) \in \widehat{M}$
  with $z \in \dd M$,
  we have $H_i(\widehat{M}, \widehat{M} \setminus \{ x \}) = 0$
  if $i \neq n$ and $\cong \IZ$ if $i=n$.
  Let $I_z := \{ z \} \x [0,1/2)$.
  Because of homotopy invariance we can replace $\{ x \}$ by
  $I_z$.
  Let $U_1 := M \cup_{\dd M} \dd M \x [0,1/2) \subset \widehat{M}$ and
  $U_2 :=  \dd M \x (0,1) \subset \widehat{M}$.
  Then
  $H_i(U_1,U_1 \setminus I_z) \cong H_i(M,M \setminus \{ z \}) = 0$
  and $H_i(U_2,U_2 \setminus I_z) = 0$.
  Because $U_1$ and $U_2$ are both open, we can use a
  Mayer-Vietoris sequence to deduce
  \[
  H_i( \widehat{M}, \widehat{M} \setminus I_z ) \cong
  H_{i-1}( U_1 \cap U_2, U_1 \cap U_2 \setminus I_z) \cong
  H_{i-1}(\dd M, \dd M \setminus \{ z \} ).
  \]
  The result follows as $\dd M$ is an $(n-1)$-dimensional
  homology $\ANR$-manifold.
\end{proof}

\begin{corollary}
  \label{cor:quinn-obstr_and_boundary}
  Let $M$ be an homology $\ANR$-manifold with boundary $\dd M$.
  If $\dd M$ is a manifold, then $\iota(\inti M) = 1$.  
\end{corollary}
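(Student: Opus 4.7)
The plan is to reduce the claim to the local invariance of the Quinn obstruction by exhibiting an open topological-manifold neighborhood of the boundary that lives inside a homology $\ANR$-manifold containing $\inti M$ as an open subset. First I would apply Lemma~\ref{lem:collar-for-homology-mfd-boundary} to form the space $\widehat{M} = M \cup_{\dd M} \dd M \x [0,1)$, which is an $n$-dimensional homology $\ANR$-manifold. Inside $\widehat{M}$ I identify two open subsets: $\inti M \subset \widehat{M}$, and the open collar $C := \dd M \x (0,1) \subset \widehat{M}$.

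The key observation is that $C$ is actually a topological manifold. Indeed, $\dd M$ is a topological manifold by hypothesis, $(0,1)$ is a $1$-manifold, and the product of two topological manifolds is a topological manifold. Provided the dimension $n$ is at least $5$ (which is the context of interest for this paper and which is needed to invoke Theorem~\ref{thm:Quinn-obstruction}~\ref{thm:Quinn-obst:manifold}), a topological manifold satisfies the DDP and has Quinn obstruction equal to $1$; this is precisely the forward direction of the equivalence in Theorem~\ref{thm:Quinn-obstruction}~\ref{thm:Quinn-obst:manifold}. Hence $\iota(C) = 1$.

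Finally, since both $\inti M$ and $C$ are open in the connected (or at any rate, common) homology $\ANR$-manifold $\widehat{M}$, the local invariance property Theorem~\ref{thm:Quinn-obstruction}~\ref{thm:Quinn-obst:local} gives
\[
\iota(\inti M) \;=\; \iota(\widehat{M}) \;=\; \iota(C) \;=\; 1.
\]
The only subtlety worth noting is the dimensional hypothesis: one must ensure $n \geq 5$ so that Theorem~\ref{thm:Quinn-obstruction}~\ref{thm:Quinn-obst:manifold} applies to the collar; this is harmless since the corollary is only used downstream in dimensions $\geq 6$. No further obstacle arises: the argument is essentially a bookkeeping exercise combining the collar construction of the previous lemma with the two formal properties of the Quinn obstruction.
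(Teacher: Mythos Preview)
Your argument is correct and follows essentially the same route as the paper: form $\widehat{M}$ via Lemma~\ref{lem:collar-for-homology-mfd-boundary}, observe that the open collar $\dd M \times (0,1)$ is a genuine manifold, and then use both parts of Theorem~\ref{thm:Quinn-obstruction} to transfer $\iota = 1$ from the collar through $\widehat{M}$ to $\inti M$. Your explicit remark about the dimension hypothesis needed to invoke Theorem~\ref{thm:Quinn-obstruction}~\ref{thm:Quinn-obst:manifold} is a welcome clarification that the paper leaves implicit.
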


\begin{proof}
  We use $\widehat{M}$ from 
  Lemma~\ref{lem:collar-for-homology-mfd-boundary}.
  If $\dd M$ is a manifold then so is $\dd M \x (0,1)$.
  The result follows now from Theorem~\ref{thm:Quinn-obstruction}.
\end{proof}


\typeout{------   Hyperbolic groups and aspherical manifolds ------}
                 
\section{Hyperbolic groups and aspherical manifolds}
\label{sec:hyperbolic-groups}

For a hyperbolic group we write 
$\overline{G} := G \cup \dd G$ for the
compactification of $G$ by its boundary,
compare~\cite[III.H.3.12]{Bridson-Haefliger(1999)},
\cite{Bestvina-Mess(1991)}. 
Left multiplication of $G$ on $G$ extends to a natural
action of $G$ on $\overline{G}$.
We will use the following properties of the topology 
on $\overline{G}$.

\begin{proposition}
  \label{prop:topology-of-overline-G}
  Let $G$ be a hyperbolic group.
  Then
  \begin{enumerate}
  \item \label{prop:overline-G:compact}
        $\overline{G}$ is compact;
  \item \label{prop:overline-G:dim}
        $\overline{G}$ is finite dimensional;
  \item \label{prop:overline-G:empty-int}
        $\dd G$ has empty interior in $\overline{G}$;
  \item \label{prop:overline-G:small}
        the action of $G$ on $\overline{G}$ is small at infinity:
        if $z \in \dd G$, $K \subset G$ is finite and 
        $U \subset \overline{G}$ is a neighborhood of $z$,
        then there exists a neighborhood $V \subseteq \overline{G}$ of
        $z$ with $V \subseteq U$ such that for any 
        $g \in G$ with $gK \cap V \neq \emptyset$
        we have $gK \subseteq U$;
  \item \label{prop:overline-G:also-small}
        if $z \in \dd G$ and $U$ is an open neighborhood
        of $z$ in $\overline{G}$, then for every finite subset 
        $K \subseteq G$ there is an open neighborhood $V$
        of $z$ in $\overline{G}$ such that  $V \subseteq U$ and
        $(V \cap G) \cdot K \subseteq U \cap G$.
  \end{enumerate}
\end{proposition}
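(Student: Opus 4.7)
My plan is to derive all five parts from the standard construction of the Gromov compactification. Fix a finite generating set of $G$, write $d_G$ for the associated word metric (with $e$ as basepoint), $|g|_G := d_G(e,g)$, and let $(\,\cdot\mid\cdot\,)_e$ denote the associated Gromov product. I would recall from Bridson--Haefliger~[III.H.3] or Bestvina--Mess that a neighborhood basis of a point $z\in\dd G$ inside $\overline G$ is given by sets of the form
\[
V_N(z) := \{x\in\overline G : \text{every sequence } (x_i)\subset G \text{ with } x_i\to x \text{ satisfies } \liminf_i (x_i\mid z)_e \ge N\},
\]
with the convention that for $x\in G$ this simply means $(x\mid z)_e\ge N$.

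Parts~\ref{prop:overline-G:compact} and~\ref{prop:overline-G:dim} are standard: compactness is Bridson--Haefliger~[III.H.3.7] and finite-dimensionality is the content of Bestvina--Mess. I would simply cite these. For~\ref{prop:overline-G:empty-int}, by construction every $z\in\dd G$ is the limit of a sequence $(g_n)\subset G$ with $|g_n|_G\to\infty$, so every open neighborhood of $z$ in $\overline G$ contains points of $G$; equivalently $G$ is dense in $\overline G$, hence $\dd G$ has empty interior.

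For~\ref{prop:overline-G:small}, set $d:=\max_{k\in K} |k|_G$. The key estimate I would use is the Lipschitz behavior of the Gromov product: for $x,y\in G$ and $z\in\dd G$ one has
\[
\bigl|(x\mid z)_e-(y\mid z)_e\bigr|\le d_G(x,y)+C
\]
for some constant $C$ depending only on the hyperbolicity constant $\delta$ (this follows from the analogous inequality for $z\in G$ by passing to $\liminf$ over sequences $z_i\to z$). Choose $N$ large enough that $V_N(z)\subseteq U$, and set $V:=V_{N+2d+C}(z)$. If $gK\cap V\neq\emptyset$, pick $k_0\in K$ with $(gk_0\mid z)_e\ge N+2d+C$; then for any $k\in K$,
\[
(gk\mid z)_e \ge (gk_0\mid z)_e - d_G(gk,gk_0) - C \ge N+2d+C-2d-C = N,
\]
so $gk\in V_N(z)\subseteq U$, which proves $gK\subseteq U$. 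Part~\ref{prop:overline-G:also-small} is entirely analogous: set $V:=V_{N+d+C}(z)$; then for $g\in V\cap G$ and $k\in K$ we have $d_G(gk,g)=|k|_G\le d$, hence $(gk\mid z)_e\ge N+d+C-d-C=N$, so $gk\in V_N(z)\cap G\subseteq U\cap G$.

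The only real obstacle is pinning down a concrete model of the topology on $\overline G$ together with the Lipschitz-type inequality for the Gromov product extended to boundary points; once those are in place, the arguments for~\ref{prop:overline-G:small} and~\ref{prop:overline-G:also-small} are essentially a single two-line estimate, and the remaining parts are direct citations or immediate.
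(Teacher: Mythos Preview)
Your argument is correct. The paper's proof, however, is almost entirely by citation: parts~\ref{prop:overline-G:compact}--\ref{prop:overline-G:small} are each dispatched by a reference (Bridson--Haefliger for compactness, \cite{Bartels-Lueck-Reich(2008cover)} for finite-dimensionality, Bestvina--Mess for empty interior, Rosenthal--Sch\"utz for smallness at infinity), and only~\ref{prop:overline-G:also-small} is argued. Where the approaches genuinely differ is in that last step. You prove~\ref{prop:overline-G:small} and~\ref{prop:overline-G:also-small} in parallel by direct Gromov-product estimates; the paper instead observes that~\ref{prop:overline-G:also-small} is a formal corollary of~\ref{prop:overline-G:small}: after enlarging $K$ so that $1_G\in K$, any $g\in V\cap G$ satisfies $g=g\cdot 1_G\in gK\cap V$, so $gK\cap V\neq\emptyset$ and~\ref{prop:overline-G:small} gives $gK\subseteq U$, hence $gK\subseteq U\cap G$. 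This deduction is shorter and avoids revisiting the Gromov product, but your approach has the advantage of actually supplying a proof of~\ref{prop:overline-G:small} rather than outsourcing it. One small point to tidy: for~\ref{prop:overline-G:also-small} you need $V$ to be \emph{open}, so either verify that your $V_N(z)$ are open in the model you fix, or pass to the interior (which still contains $z$ since the $V_N(z)$ form a neighborhood basis).
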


\begin{proof}~\ref{prop:overline-G:compact} see for 
  instance~\cite[III.H.3.7(4)]{Bridson-Haefliger(1999)}.
  \\~\ref{prop:overline-G:dim} see for 
  instance~\cite[9.3.(ii)]{Bartels-Lueck-Reich(2008cover)}.
  \\~\ref{prop:overline-G:empty-int} is obvious from the definition of the
  topology in~\cite{Bestvina-Mess(1991)}. 
  \\~\ref{prop:overline-G:small}
  see for instance~\cite[page~531]{Rosenthal-Schuetz(2005)}.
  \\~\ref{prop:overline-G:also-small} follows
  from~\ref{prop:overline-G:small}: We may assume $1_G \in K$.
  Pick $V$ as in~\ref{prop:overline-G:small}.
  If $g \in V \cap G$ and $k \in K$,
  then $g \in gK \cap V$. 
  Thus $gK \subseteq U$.
  Therefore $gK \in U \cap G$. 
\end{proof}

Let $X$ be a locally compact space with a cocompact and proper action
of a hyperbolic group $G$.  
Then we equip $\overline{X} := X \cup \dd G$ with the topology
$\calo_{\overline{X}}$
for which a typical open neighborhood of $x \in X$ is an open
subset of $X$ and a typical (not necessarily open) neighborhood
of $z \in \dd G$ is of the form
\[
(U \cap \dd G) \cup (U \cap G) \cdot K 
\]
where $U$ is an open neighborhood of $z$ in $\overline{G}$
and $K$ is a compact subset of $X$ such that $G \cdot K = X$.
We observe that we could fix the choice of $K$ in the 
definition of $\calo_{\overline{X}}$: let $U$, $z$ and $K$ be 
as above and let $K'$ be a further compact subset of $X$ such that
$G \cdot K' = X$.
Because the $G$-action is proper, there is a finite subset $L$
of $G$ such that $K' \subseteq L \cdot K$.
By Proposition~\ref{prop:topology-of-overline-G}~%
\ref{prop:overline-G:also-small} there is an open 
neighborhood $V \subseteq U$ of 
$z \in \overline{G}$ such that
$(V \cap G) \cdot L \subseteq  U \cap G$.
Thus
\[
(V \cap \dd G) \cup (V \cap G) \cdot K'
\subseteq
(U \cap \dd G) \cup (V \cap G) \cdot L \cdot K
\subseteq
(U \cap \dd G) \cup (U \cap G)  \cdot K.
\]

If $f \colon X \to Y$ is a $G$-equivariant continuous map
where $Y$ is also a locally compact space with
a cocompact proper $G$-action, then we define 
$\overline{f} \colon \overline{X} \to \overline{Y}$
by $\overline{f}|_X := f$ and 
$\overline{f}|_{\dd G} := \id_{\dd G}$. 

\begin{lemma}
  Let $G$ be a hyperbolic group and $X$ be a locally compact space 
  with a cocompact and proper $G$-action.  
  \label{lem:overline-X}
  \begin{enumerate}
  \item \label{lem:over-X:compact}
        $\overline{X}$ is compact;
  \item \label{lem:over-X:empty-int}
        $\dd G$ is closed in $\overline{X}$ and its
        interior in $\overline{X}$ is empty;
  \item \label{lem:over-X:dim}
        if  $\dim X$ is finite, then $\dim \overline{X}$
        is also finite;
  \item \label{lem:over-X:funct}
        if $f \colon X \to Y$ is a $G$-equivariant continuous map
        where $Y$ is also a locally compact space with
        a cocompact proper $G$-action, then
        $\overline{f}$ is continuous. 
  \end{enumerate}
\end{lemma}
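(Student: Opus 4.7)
The four statements all reduce to controlling basic neighborhoods of points of $\dd G$ in $\overline{X}$. Throughout, fix a compact $K \subseteq X$ with $G \cdot K = X$; properness of the $G$-action implies that $H := \{g \in G : gK \cap K \neq \emptyset\}$ and, more generally, $\{g \in G : gK \cap C \neq \emptyset\}$ for any compact $C \subseteq X$ is finite.

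For \ref{lem:over-X:compact}, I would start with an open cover of $\overline{X}$. Using compactness of $\dd G$ (Proposition~\ref{prop:topology-of-overline-G}\ref{prop:overline-G:compact}), refine it over $\dd G$ to finitely many basic neighborhoods $(U_i \cap \dd G) \cup (U_i \cap G) \cdot K$, each contained in an element of the cover. The set $F := G \setminus \bigcup_i U_i$ is closed in the compact space $\overline{G}$ and sits inside the discrete set $G$, hence is finite. Any $x = gk \in X$ not lying in $\bigcup_i (U_i \cap G) \cdot K$ satisfies $g \in F$, so the uncovered part lies in the compact set $F \cdot K$ and can be covered by finitely many more members. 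Statement \ref{lem:over-X:empty-int} is then immediate: $X$ is open in $\overline{X}$ by definition of the topology, so $\dd G$ is closed, and every basic neighborhood of $z \in \dd G$ contains points of $X$ because $U \cap G$ is non-empty by Proposition~\ref{prop:topology-of-overline-G}\ref{prop:overline-G:empty-int}.

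For \ref{lem:over-X:funct}, fix a cocompact $K_X \subseteq X$ and a basic neighborhood $V = (U \cap \dd G) \cup (U \cap G) \cdot K_Y$ of $z \in \dd G$ in $\overline{Y}$. Since $f(K_X) \subseteq Y = G \cdot K_Y$ is compact, there exists a finite $L \subseteq G$ with $f(K_X) \subseteq L \cdot K_Y$. By Proposition~\ref{prop:topology-of-overline-G}\ref{prop:overline-G:also-small} there is an open neighborhood $U' \subseteq U$ of $z$ with $(U' \cap G) \cdot L \subseteq U \cap G$, and $G$-equivariance of $f$ then gives $\overline{f}\bigl((U' \cap \dd G) \cup (U' \cap G) \cdot K_X\bigr) \subseteq V$. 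Continuity of $\overline{f}$ at points of $X$ is inherited from $f$ since $X$ is open in $\overline{X}$.

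The main obstacle is \ref{lem:over-X:dim}. I would first verify that $\overline{X}$ is compact Hausdorff and second countable, hence metrizable by Urysohn. For Hausdorffness at two points $z \neq z' \in \dd G$, pick disjoint open neighborhoods $U, U' \subseteq \overline{G}$ and shrink $U$ to $U''$ via Proposition~\ref{prop:topology-of-overline-G}\ref{prop:overline-G:also-small} applied to $H$, so that $(U'' \cap G) \cdot H \subseteq U \cap G$; then $(U'' \cap \dd G) \cup (U'' \cap G) \cdot K$ and $(U' \cap \dd G) \cup (U' \cap G) \cdot K$ are disjoint, since $gk = g'k'$ with $g \in U'' \cap G$ and $g' \in U' \cap G$ would force $g^{-1}g' \in H$, hence $g' \in (U'' \cap G) \cdot H \subseteq U$, contradicting $U \cap U' = \emptyset$. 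Separating a point of $X$ from a point of $\dd G$ is similar and easier. Second countability follows by combining a countable basis of $X$ with the countably many basic neighborhoods $(U \cap \dd G) \cup (U \cap G) \cdot K$ obtained as $U$ ranges over a countable basis of $\overline{G}$ (the paragraph preceding the lemma makes the choice of $K$ immaterial). With $\overline{X}$ now a compact metric space, the decomposition theorem for separable metric spaces, $\dim(A \cup B) \le \dim A + \dim B + 1$, applied to $\overline{X} = X \cup \dd G$, gives $\dim \overline{X} < \infty$ once we use the hypothesis $\dim X < \infty$ and $\dim \dd G \le \dim \overline{G} < \infty$ from Proposition~\ref{prop:topology-of-overline-G}\ref{prop:overline-G:dim}.
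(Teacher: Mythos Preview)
Your argument is essentially sound and far more detailed than the paper's, which simply asserts that the claims ``are easily deduced from the observation following the definition of the topology $\calo_{\overline{X}}$ and Proposition~\ref{prop:topology-of-overline-G}.'' Your treatments of \ref{lem:over-X:compact}, \ref{lem:over-X:empty-int}, and \ref{lem:over-X:funct} are correct and are a reasonable way to unpack that assertion.

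There is one genuine gap in your handling of \ref{lem:over-X:dim}: you assume that $X$ has a countable basis in order to conclude that $\overline{X}$ is second countable and hence metrizable, but the lemma's hypotheses do not include this (only local compactness plus a proper cocompact action). In the paper's applications $X$ is always second countable, so the issue is harmless there, but as a proof of the lemma as stated it is incomplete. The cleanest repair avoids metrizability altogether: once you have shown (as you do) that $\overline{X}$ is compact Hausdorff, hence normal, write
\[
\overline{X} \;=\; \dd G \;\cup\; \bigcup_{g \in G} gK,
\]
a countable union of closed subsets. Each $gK$ is homeomorphic to $K$, which is closed in the finite-dimensional space $X$, so $\dim gK = \dim K \le \dim X < \infty$; and $\dim \dd G \le \dim \overline{G} < \infty$ by Proposition~\ref{prop:topology-of-overline-G}\ref{prop:overline-G:dim}. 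The countable closed sum theorem for normal spaces then gives $\dim \overline{X} \le \max(\dim \dd G, \dim K) < \infty$. This sidesteps both the second-countability issue and the appeal to the Menger--Urysohn decomposition theorem, and is more in the spirit of the dimension argument the paper uses elsewhere (see the proof of Lemma~\ref{lem:collar-for-homology-mfd-boundary}).
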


\begin{proof}
These claims are easily deduced from the observation following the
definition of the topology $\calo_{\overline{X}}$ and 
Proposition~\ref{prop:topology-of-overline-G}.
\end{proof}

We recall that for a hyperbolic group $G$ equipped with
a (left invariant) 
word-metric $d_G$ and a number $d > 0$ the Rips complex
$P_d(G)$ is the simplicial complex whose vertices are the
elements of $G$, and a collection $g_1,\dots,g_k \in G$
spans a simplex if $d_G(g_i,g_j) \leq d$ for all $i,j$.
The action of $G$ on itself by left translation
induces an action of $G$ on $P_d(G)$.
Recall that a closed subset $Z$ in a compact $\ANR$ $Y$ is a \emph{$Z$-set}
if for every open set $U$ in $Y$ the inclusion
$U \setminus Z \to U$ is a homotopy equivalence.
An important result of Bestvina-Mess~\cite{Bestvina-Mess(1991)}
asserts that  (for sufficiently large $d$) $\overline{P_d(G)}$ is an
$\ANR$ such that $\dd G \subset \overline{P_d(G)}$ is $Z$-set.
The proof uses the following criterion 
\cite[Proposition~2.1]{Bestvina-Mess(1991)}:

\begin{proposition}
  \label{prop:criterion-for-Z-set}
  Let $Z$ be a closed subspace of the compact space $Y$
  such that
  \begin{enumerate}
  \item \label{prop:crit:empty-int}
        the interior of $Z$ in $Y$ is empty;
  \item \label{prop:crit:dim}
        $\dim Y < \infty$;
  \item \label{prop:crit:loc-contr}
        for every $k = 0,\dots,\dim Y$, every $z \in Z$ 
        and every neighborhood $U$ of $z$, there is a
        neighborhood $V$ of $z$ such that every map
        $\alpha \colon S^k \to V \setminus Z$
        extends to $\tilde{\alpha} \colon D^{k+1} \to U \setminus Z$;
  \item \label{prop:crit:ANR}
        $Y \setminus Z$ is an $\ANR$. 
  \end{enumerate}
  Then $Y$ is an $\ANR$ and $Z \subset Y$ is a $Z$-set.
\end{proposition}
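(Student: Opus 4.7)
The plan is to establish the two conclusions in sequence: first that $Y$ is an ANR, then that $Z \subset Y$ satisfies the $Z$-set condition.

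For the ANR assertion I would invoke the classical Borsuk--Hanner characterization: a compact metrizable space of finite topological dimension is an ANR if and only if it is locally $n$-connected for $n = \dim Y$. Condition (ii) places us in this finite-dimensional setting, and by (iv) together with the openness of $Y \setminus Z$ in $Y$, local $n$-connectedness is automatic at every point of $Y \setminus Z$. For a point $z \in Z$ with prescribed neighborhood $U$, the plan is: apply (iii) for $k = \dim Y$ to obtain $V \subseteq U$; given a map $\alpha \colon S^k \to V$, first perturb it to a nearby map $\alpha' \colon S^k \to V \setminus Z$, then apply (iii) to extend $\alpha'$ over $D^{k+1}$ to a map into $U \setminus Z \subseteq U$, and concatenate the two.

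The perturbation step is the technical heart of the proof. I would triangulate $S^k$ so finely that each closed simplex maps into a small preassigned open set, then push the map off $Z$ skeleton by skeleton: on the $0$-skeleton use (i) that $Z$ has empty interior to move each vertex to a close point of $Y \setminus Z$; inductively, on a $(j+1)$-cell $\sigma$ whose attaching map has already been pushed off $Z$, condition (iii) (applied in a suitably small neighborhood chosen by uniform continuity, and combined with (iv) to realize the extension inside the ANR $Y \setminus Z$) fills $\sigma$ by a map avoiding $Z$. Compactness of $Y$ and a Lebesgue-number argument control the total size of the perturbation so that the final map stays inside $V$.

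With $Y$ now an ANR, the open sets $U$ and $U \setminus Z$ are ANRs as well (the latter because $U \setminus Z$ is open in the ANR $Y \setminus Z$). To verify the $Z$-set condition I would then apply Whitehead's theorem for ANRs and show that the inclusion $U \setminus Z \hookrightarrow U$ is a weak homotopy equivalence: surjectivity on $\pi_k$ comes from pushing a given $\alpha \colon S^k \to U$ off $Z$ by exactly the skeletal induction above, with the homotopy taking place inside $U$; injectivity comes from pushing a nullhomotopy $D^{k+1} \to U$ off $Z$ relative to its already-off-$Z$ boundary, using (iii) on each interior cell. The main obstacle throughout is this skeleton-by-skeleton pushing-off construction: one must simultaneously control scales so the perturbation stays inside $V$ (and later inside $U$), and invoke (iii) with neighborhoods nested tightly enough to survive the induction through dimensions $0, 1, \dots, \dim Y$. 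All four hypotheses enter at this stage: (i) supplies the initial vertex adjustment, (ii) bounds the induction, (iii) performs the cell-by-cell extension, and (iv) guarantees the extensions can be taken inside $Y \setminus Z$.
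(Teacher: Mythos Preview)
The paper does not give its own proof of this proposition: it is quoted verbatim from Bestvina--Mess~\cite[Proposition~2.1]{Bestvina-Mess(1991)} and used as a black box. So there is no in-paper argument to compare against.

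That said, your sketch is essentially the argument Bestvina and Mess give. They too reduce the $\ANR$ claim to the finite-dimensional local-connectivity characterization (a compact metric space of dimension $n$ that is $\mathrm{LC}^n$ is an $\ANR$), and the heart of their proof is exactly the skeletal ``pushing off $Z$'' construction you describe: triangulate finely, use~\ref{prop:crit:empty-int} to move vertices into $Y\setminus Z$, and then climb the skeleta using~\ref{prop:crit:loc-contr} near $Z$ and the $\ANR$ property~\ref{prop:crit:ANR} away from it, with nested neighborhoods chosen in advance so the induction through dimensions $0,\dots,\dim Y$ does not escape the prescribed $V$. Your awareness that this nesting is the genuine technical point is correct.

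For the $Z$-set conclusion your route via Whitehead's theorem (showing $U\setminus Z\hookrightarrow U$ is a weak equivalence between $\ANR$s, hence a homotopy equivalence) is a legitimate variant. Bestvina--Mess instead package the same pushing-off induction into a single homotopy $H_t\colon Y\to Y$ with $H_0=\id$ and $H_t(Y)\subseteq Y\setminus Z$ for $t>0$, which yields the $Z$-set property directly and is what the present paper later uses (see the proof of Proposition~\ref{prop:Z-set-yields-homology-mfd-with-boundary}). Both approaches rest on the same approximation lemma.

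One small point you glide over: the $\mathrm{LC}^n$ characterization of $\ANR$s requires $Y$ to be a separable metric space, which is not literally among the stated hypotheses. In every application in the paper $Y$ is metric (it is $\overline{X}$ for some locally compact $\ANR$ $X$), and Bestvina--Mess work in the metric category throughout, so this is harmless in context---but it is worth stating explicitly if you write the argument out.
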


Condition~\ref{prop:crit:loc-contr}  
is sometimes abbreviated by saying
that $Z$ is $k$-LCC in $Y$, where $k = \dim Y$.

\begin{theorem}
  \label{thm:Z-set-in-X}
  Let $X$ be a locally compact $\ANR$ with a cocompact and proper
  action of a hyperbolic group $G$.
  Assume that there is a $G$-equivariant homotopy 
  equivalence $X \to P_d(G)$.
  If $d$ is sufficiently large, then 
  $\overline{X}$ is an $\ANR$, $\dd G$ is $Z$-set in
  $\overline{X}$ and $Z$ is $k$-LCC in $X$ for all $k$.
\end{theorem}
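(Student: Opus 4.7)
The plan is to apply the criterion of Proposition~\ref{prop:criterion-for-Z-set} with $Y := \overline{X}$ and $Z := \dd G$; verifying its four conditions will simultaneously establish that $\overline{X}$ is an $\ANR$, that $\dd G$ is a $Z$-set in $\overline{X}$, and the $k$-LCC assertion. Conditions~\ref{prop:crit:empty-int}, \ref{prop:crit:dim}, and~\ref{prop:crit:ANR} are essentially given: the interior of $\dd G$ in $\overline{X}$ is empty by Lemma~\ref{lem:overline-X}~\ref{lem:over-X:empty-int}; the finite-dimensionality of $\overline{X}$ follows from Lemma~\ref{lem:overline-X}~\ref{lem:over-X:dim} once one observes that $X$ itself is finite-dimensional (the cocompact proper $G$-action makes $G\backslash X$ a compact metric $\ANR$, hence of finite covering dimension, and $X$ inherits a uniformly bounded local dimension from this); and the $\ANR$ hypothesis on $\overline{X} \setminus \dd G = X$ is part of the hypothesis.

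The crux is condition~\ref{prop:crit:loc-contr}, the $k$-LCC property of $\dd G$ in $\overline{X}$. I would transfer it from the analogous property of $\dd G$ in $\overline{P_d(G)}$, which Bestvina-Mess~\cite{Bestvina-Mess(1991)} establish for $d$ sufficiently large via the same criterion. Fix $G$-equivariant maps $f \colon X \to P_d(G)$ and $g \colon P_d(G) \to X$ together with $G$-equivariant homotopies $H \colon g \circ f \simeq \id_X$ and $H' \colon f \circ g \simeq \id_{P_d(G)}$. By Lemma~\ref{lem:overline-X}~\ref{lem:over-X:funct} the extensions $\overline{f}$ and $\overline{g}$ are continuous. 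Define $\overline{H} \colon \overline{X} \times I \to \overline{X}$ to be $H$ on $X \times I$ and the identity on $\dd G$ at every time; the key point is continuity at $(z,t) \in \dd G \times I$. Fix a compact $K \subseteq X$ with $G \cdot K = X$; by $G$-equivariance of $H$ and compactness of $I$ there is a finite $L \subseteq G$ with $H(K \times I) \subseteq L \cdot K$. Given a basic neighborhood $(U_0 \cap \dd G) \cup (U_0 \cap G) \cdot K$ of $z$ in $\overline{X}$, Proposition~\ref{prop:topology-of-overline-G}~\ref{prop:overline-G:also-small} supplies a neighborhood $V_0$ of $z$ in $\overline{G}$ with $(V_0 \cap G) \cdot L \subseteq U_0 \cap G$, and the corresponding neighborhood of $z$ in $\overline{X}$, crossed with $I$, is carried into the basic neighborhood by $\overline{H}$. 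The same argument handles $\overline{H}'$.

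With continuous extensions in hand, the $k$-LCC property transfers by a standard diagram chase. Given $z \in \dd G$ and an open neighborhood $U$ of $z$ in $\overline{X}$, successively produce: a neighborhood $U'' \ni z$ in $\overline{X}$ with $\overline{H}(U'' \times I) \subseteq U$ (using compactness of $\{z\} \times I$); a neighborhood $U' \ni z$ in $\overline{P_d(G)}$ with $\overline{g}(U') \subseteq U''$; a neighborhood $V' \ni z$ in $\overline{P_d(G)}$ provided by $k$-LCC of $\dd G$ in $\overline{P_d(G)}$ applied to $U'$; and finally $V \ni z$ in $\overline{X}$ with $\overline{f}(V) \subseteq V'$. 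Any $\alpha \colon S^k \to V \cap X$ yields $f \circ \alpha \colon S^k \to V' \cap P_d(G)$, which extends to $\beta \colon D^{k+1} \to U' \cap P_d(G)$; then $g \circ \beta \colon D^{k+1} \to U'' \cap X$ has boundary $g \circ f \circ \alpha$, which is joined to $\alpha$ by the cylinder $H \circ (\alpha \times \id) \colon S^k \times I \to U \cap X$, producing the required extension $\tilde\alpha \colon D^{k+1} \to U \cap X$. The main technical obstacle is the continuity of $\overline{H}$ at points of $\dd G \times I$, which is precisely where the $G$-equivariance of the homotopy and the small-at-infinity behavior of the boundary compactification (Proposition~\ref{prop:topology-of-overline-G}) are indispensable.
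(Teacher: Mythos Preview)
Your proof is correct and follows the paper's approach exactly: verify the hypotheses of Proposition~\ref{prop:criterion-for-Z-set} for $\overline{X}$ and transfer the $k$-LCC property from $\overline{P_d(G)}$ via the extended homotopy equivalence; you in fact supply the continuity argument for $\overline{H}$ that the paper leaves implicit behind the phrase ``$f \mapsto \overline f$ is clearly functorial''. One caution: your justification that $\dim X < \infty$ is not quite right---compact metric $\ANR$s need not be finite-dimensional (the Hilbert cube is one)---so either take finite-dimensionality of $X$ as a standing hypothesis (as the paper tacitly does, since in its applications $X$ is the universal cover of a closed homology $\ANR$-manifold) or argue more carefully from the local structure.
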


\begin{proof}
  Bestvina-Mess~\cite[page~473]{Bestvina-Mess(1991)} 
  show that (for sufficiently large $d$)
  $\overline{P_d(G)}$ satisfies the assumptions of
  Proposition~\ref{prop:criterion-for-Z-set}.
  Moreover, they show that $Z$ is $k$-LCC in $\overline{X}$
  for all $k$. 
  Using this, it is not hard to show, that $\overline{X}$ 
  satisfies these assumptions as well:
  Assumptions~\ref{prop:crit:empty-int} and~\ref{prop:crit:dim}
  hold because of Lemma~\ref{lem:overline-X}. 
  Assumption~\ref{prop:crit:ANR} holds because $X$ is an $\ANR$.
  Because $f \mapsto \overline{f}$ is clearly functorial,
  the homotopy equivalence $X \to P_d(G)$ induces a
  homotopy equivalence $\overline{X} \to \overline{P_d(G)}$
  that fixes $\dd G$.
  Using this homotopy equivalence it is easy to check 
  that $\dd G$ is $k$-LCC in $\overline{X}$, 
  because it is $k$-LCC in $\overline{P_d(G)}$.
  Thus Assumption~\ref{prop:crit:loc-contr}
  holds.
\end{proof}

\begin{proposition}
  \label{prop:Z-set-yields-homology-mfd-with-boundary}
  Let $M$ be a finite dimensional locally compact $\ANR$ 
  which is the disjoint union of an $n$-dimensional 
  $\ANR$-homology manifold $\inti M$ and an $(n-1)$-dimensional 
  $\ANR$-homology manifold $\dd M$ such that $\dd M$ is a 
  $Z$-set in $M$.
  Then $M$ is an $\ANR$-homology manifold with boundary $\dd M$.
\end{proposition}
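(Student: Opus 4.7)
My plan is to verify the axioms of Definition~\ref{def:homology-mfd-with-boundary}. All of them are immediate from the hypotheses of the proposition except the requirement that $H_i(M, M \setminus \{z\}) = 0$ for every $i$ and every $z \in \dd M$. Thus the entire content of the proof is to establish this vanishing, using only that $\dd M$ is a $Z$-set in $M$; the dimensions and the homology-manifold structures on $\inti M$ and $\dd M$ play no role.

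The key tool is the long exact sequence of the triple $(M,\,M\setminus\{z\},\,M\setminus \dd M)$:
\[
\cdots \to H_i(M\setminus\{z\},\,M\setminus \dd M) \to H_i(M,\,M\setminus \dd M) \to H_i(M,\,M\setminus\{z\}) \to H_{i-1}(M\setminus\{z\},\,M\setminus \dd M) \to \cdots.
\]
I would use the $Z$-set hypothesis twice to kill the two relative groups flanking $H_i(M,\,M\setminus\{z\})$ in this sequence.

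First, applying the defining property of a $Z$-set to the open subset $M \subseteq M$ itself shows that $M \setminus \dd M = \inti M \hookrightarrow M$ is a homotopy equivalence, so $H_i(M,\,M\setminus \dd M) = 0$ for every $i$. Second, since $M$ is normal and hence $T_1$, the singleton $\{z\}$ is closed in $M$, so $M \setminus \{z\}$ is open in $M$; applying the $Z$-set property to this open set, and noting that $(M \setminus \{z\}) \setminus \dd M = M \setminus \dd M$ because $z \in \dd M$, we deduce that $M \setminus \dd M \hookrightarrow M \setminus \{z\}$ is also a homotopy equivalence, whence $H_i(M \setminus \{z\},\,M \setminus \dd M) = 0$. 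Exactness of the triple sequence then forces $H_i(M,\,M\setminus\{z\}) = 0$ for all $i$, which is what was required.

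I do not expect any genuine obstacle here: the whole argument is a short diagram chase, and the only real observation is that the $Z$-set property—phrased uniformly for all open subsets—can be applied both to $U = M$ and to $U = M \setminus \{z\}$, the latter being legitimate because the point $z$ is closed.
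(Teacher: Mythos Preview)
Your argument is correct. Both you and the paper reduce the problem to showing that the inclusion $M \setminus \{z\} \hookrightarrow M$ induces isomorphisms on homology, but you arrive there by slightly different routes. The paper invokes an equivalent characterization of $Z$-sets (cited from Bestvina--Mess): there is a homotopy $H_t \colon M \to M$ with $H_0 = \id_M$ and $H_t(M) \subseteq \inti M$ for $t>0$; this single homotopy, restricted to $M \setminus \{z\}$, exhibits $H_1$ as a homotopy inverse to the inclusion in one stroke. You instead apply the $Z$-set definition as stated in the paper twice, to $U=M$ and to $U = M \setminus \{z\}$, obtaining that $\inti M \hookrightarrow M$ and $\inti M \hookrightarrow M \setminus \{z\}$ are both homotopy equivalences. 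Your long exact sequence of the triple is then really just the $2$-out-of-$3$ property in disguise: from those two equivalences it follows directly that $M \setminus \{z\} \hookrightarrow M$ is a homotopy equivalence, so you could have bypassed the triple sequence entirely. Either way the content is the same; your version has the mild advantage of using only the definition given in the paper rather than an auxiliary characterization.
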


\begin{proof}
  The $Z$-set condition implies that there exists a homotopy
  $H_t \colon M \to M$, $t \in [0,1]$ such that $H_0 = \id_M$ and
  $H_t (M) \subseteq \inti M$ for all $t > 0$, 
  see~\cite[page~470]{Bestvina-Mess(1991)}.
  
  Let $z \in \dd M$. 
  Then the restriction of $H_1$ to $M \setminus \{ z \}$
  is a homotopy inverse for the inclusion
  $M \setminus \{ z \} \to M$.
  Thus $H_i(M, M \setminus \{ z \}) = 0$ for all $i$.
\end{proof}

There is the following (harder) manifold version of
Proposition~\ref{prop:Z-set-yields-homology-mfd-with-boundary}
due to Ferry and Seebeck~\cite[Theorem~5 on page~579]{Ferry(1979)}.

\begin{theorem}
  \label{thm:ferry-seebeck}
  Let $M$ be a locally compact with a countable basis
  of the topology.
  Assume that $M$ is the disjoint union of
  an $n$-dimensional  manifold $\inti M$ 
  and an $(n-1)$-dimensional  manifold $\dd M$ such that $\inti M$ 
  is dense in $M$  and $\dd M$ is  $(n-1)$-LCC in $M$.
  Then $M$ is an $n$-manifold
  with boundary $\dd M$.
\end{theorem}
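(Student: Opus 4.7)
The plan is to reduce to a local claim near each boundary point and then to convert the $(n-1)$-LCC hypothesis into a product (collar) neighborhood. It suffices to exhibit, for every $z\in\dd M$, an open neighborhood $W\subset M$ of $z$ and a homeomorphism $W\to\IR^{n-1}\times[0,1)$ sending $z$ to the origin and $W\cap\dd M$ to $\IR^{n-1}\times\{0\}$; combined with the given $n$-manifold structure on $\inti M$, such charts exhibit $M$ as an $n$-manifold with boundary $\dd M$.

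First I would fix $z\in\dd M$ and, using that $\dd M$ is an $(n-1)$-manifold, choose a precompact open neighborhood $V$ of $z$ in $\dd M$ together with a homeomorphism $V\cong\IR^{n-1}$. The key step is to produce a collar, namely an open embedding
\[
c\colon V\times[0,1)\hookrightarrow M,\qquad c(v,0)=v,\qquad c\bigl(V\times(0,1)\bigr)\subset\inti M,
\]
whose image is an open neighborhood of $V$ in $M$. Given such a $c$, patching it with charts of the $n$-manifold $\inti M$ yields the required half-space chart around $z$. Note that the density of $\inti M$ in $M$ together with the $k=0$ instance of the LCC hypothesis rules out $\dd M$ being two-sided in $M$ near $z$ (points on opposite sides of $\dd M$ inside a bicollar neighborhood could not be joined within the bicollar minus $\dd M$), so a one-sided collar does in fact exhaust $M$ near $V$.

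The heart of the matter is the construction of $c$ from the $(n-1)$-LCC hypothesis together with the fact that $\inti M$ is an honest $n$-manifold. The LCC condition provides exactly the right tameness: for every neighborhood $U$ of $z$ there is a smaller neighborhood $V'$ such that every map $S^k\to V'\setminus\dd M$, $k\le n-1$, bounds a disk in $U\setminus\dd M$. Using a countable decreasing basis of neighborhoods of $z$, I would inductively build nested approximate product neighborhoods of small patches of $V$ in $M$ with progressively finer approximation, and then pass to a limit via a shrinking argument in $\inti M$ (controlled by the fixed chart on $V$) to obtain the honest collar $c$.

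The main obstacle is this engulfing-and-shrinking construction, which is the technical core of Ferry-Seebeck and amounts to a codimension-one local flatness theorem in topological manifolds. In dimensions $n\ge 5$ the standard machinery of topological manifolds (handle decompositions, cell-like approximation theorems of Edwards, Kirby-Siebenmann theory) reduces the problem to moving approximate collars by controlled small homeomorphisms of $\inti M$, whereas the low-dimensional cases require additional care. Once $c$ is in hand, the remaining bookkeeping -- patching the collar with charts on $\inti M$ to obtain half-space charts at every boundary point -- is routine and yields the desired manifold-with-boundary structure on $M$.
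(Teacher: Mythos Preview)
The paper does not give its own proof of this theorem; it is stated with attribution to Ferry and Seebeck and a citation to \cite[Theorem~5 on page~579]{Ferry(1979)}, and no argument is supplied. So there is nothing in the paper to compare your proposal against.

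As to the proposal itself: your high-level strategy is the right shape for a codimension-one local flatness result---reduce to producing a one-sided collar $c\colon V\times[0,1)\hookrightarrow M$ near each boundary point, and note that such a collar together with the existing charts on $\inti M$ gives half-space charts. Your use of the $k=0$ case of the LCC hypothesis to rule out two-sidedness is correct. However, you yourself identify the gap: the ``engulfing-and-shrinking construction'' that actually produces $c$ is the entire content of the theorem, and you have only described it in outline (``inductively build nested approximate product neighborhoods \dots\ pass to a limit via a shrinking argument''). Without either carrying this out or citing a precise local flatness statement whose hypotheses you verify---for instance, one of the $1$-LCC local flatness theorems for codimension-one submanifolds due to Daverman, Cannon, or Ferry---what you have written is a plan rather than a proof. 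If your aim is to match the paper, a bare citation to Ferry--Seebeck is all that is required; if your aim is to supply an independent proof, the engulfing step needs to be made explicit.
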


\begin{theorem}
  \label{thm:boundary-has-spherical-cech-cohomology}
  Let $G$ be a torsion-free word-hyperbolic group.
  Let $n \geq 6$.
  \begin{enumerate}
  \item \label{thm:boundary-has-spherical-cech-cohomology:ex}
  The following statements are equivalent:
  \begin{enumerate}
    \item \label{thm:boundary-cech:cech} 
          the boundary $\dd G$ has the integral \v{C}ech cohomology of
          $S^{n-1}$;
    \item \label{thm:boundary-cech:dualtity}
          $G$ is a Poincar\'e duality group of formal dimension $n$;
    \item \label{thm:boundary-cech:homology-mfd}
          there exists a closed $\ANR$-homology manifold
          $M$ homotopy equivalent to $BG$. 
          In particular, $M$ is aspherical and $\pi_1(M) \cong G$;
  \end{enumerate}
  \item \label{thm:boundary-has-spherical-cech-cohomology:DDP} 
  If the statements in 
  assertion~\ref{thm:boundary-has-spherical-cech-cohomology:ex} 
  hold, then the homology $\ANR$-manifold $M$ appearing there
  can be arranged to have the DDP;
  \item \label{thm:boundary-has-spherical-cech-cohomology:uniqueness}
  If the statements in 
  assertion~\ref{thm:boundary-has-spherical-cech-cohomology:ex}
  hold, then the homology  $\ANR$-manifold $M$ appearing there is
  unique up to $s$-cobordism of $\ANR$-homology manifolds.
\end{enumerate}
\end{theorem}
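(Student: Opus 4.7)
The three assertions follow by combining Theorem~\ref{thm:FJ_and_Borel-existence} with the $K$- and $L$-theoretic Farrell-Jones conjectures for torsion-free hyperbolic groups and with a Čech-cohomology identification due to Bestvina-Mess. I would split the argument as follows.

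First, I would verify that the hypotheses of Theorem~\ref{thm:FJ_and_Borel-existence} hold for a torsion-free hyperbolic group $G$: the $K$-theoretic assembly map is an isomorphism in the range required by that theorem by~\cite{Bartels-Lueck-Reich(2008hyper)}, and the $L$-theoretic assembly map is an isomorphism for every orientation homomorphism $w\colon G \to \{\pm 1\}$ by~\cite{Bartels-Lueck(2009borelhyp)}. Plugging this into Theorem~\ref{thm:FJ_and_Borel-existence} immediately gives the equivalence \ref{thm:boundary-cech:dualtity}~$\Leftrightarrow$~\ref{thm:boundary-cech:homology-mfd} together with the DDP refinement~\ref{thm:boundary-has-spherical-cech-cohomology:DDP} and the $s$-cobordism uniqueness~\ref{thm:boundary-has-spherical-cech-cohomology:uniqueness}. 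The only remaining task is to show \ref{thm:boundary-cech:cech}~$\Leftrightarrow$~\ref{thm:boundary-cech:dualtity}.

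For this, note first that for $d$ sufficiently large the Rips complex $P_d(G)$ is a finite-dimensional contractible simplicial complex on which $G$ (being torsion-free hyperbolic) acts freely, properly and cocompactly, so $P_d(G)/G$ is a finite model for $BG$. In particular $G$ already admits a finite-length finitely generated projective $\IZ G$-resolution of $\IZ$, and hence condition~\ref{thm:boundary-cech:dualtity} reduces to the cohomological requirement $H^i(G;\IZ G) = 0$ for $i \neq n$ and $H^n(G;\IZ G) \cong \IZ$. Applying Theorem~\ref{thm:Z-set-in-X} to $X = P_d(G)$ shows that $\overline{P_d(G)} = P_d(G) \cup \dd G$ is a compact $\ANR$ in which $\dd G$ is a $Z$-set. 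Following~\cite{Bestvina-Mess(1991)}, this $Z$-compactification of the contractible space $P_d(G)$ produces, via the long exact sequence of the pair $(\overline{P_d(G)}, \dd G)$ (together with the identification $H^{i}_c(P_d(G)) \cong H^i(G;\IZ G)$ coming from the finite model of $BG$), a natural isomorphism
\[
H^i(G; \IZ G) \;\cong\; \widetilde{\check H}{}^{i-1}(\dd G; \IZ)
\]
for all $i \geq 1$. Under this isomorphism the cohomological requirement in~\ref{thm:boundary-cech:dualtity} translates verbatim into the condition that $\dd G$ has the integral Čech cohomology of $S^{n-1}$, that is, \ref{thm:boundary-cech:cech}.

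The main technical input is the Bestvina-Mess $Z$-set cohomology isomorphism above; this is essentially a bookkeeping consequence of the $Z$-set property already provided by Theorem~\ref{thm:Z-set-in-X}, so I do not expect a hard step. The only care required is in tracking orientation characters when invoking the $L$-theoretic Farrell-Jones input of Theorem~\ref{thm:FJ_and_Borel-existence} (a PD group need not be orientable), which is the reason~\cite{Bartels-Lueck(2009borelhyp)} is quoted for \emph{every} $w$.
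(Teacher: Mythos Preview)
Your proposal is correct and follows essentially the same route as the paper: invoke the Farrell--Jones results~\cite{Bartels-Lueck-Reich(2008hyper),Bartels-Lueck(2009borelhyp)} to feed Theorem~\ref{thm:FJ_and_Borel-existence} (giving \ref{thm:boundary-cech:dualtity}~$\Leftrightarrow$~\ref{thm:boundary-cech:homology-mfd}, DDP, and uniqueness), and use the Bestvina--Mess identification $H^i(G;\IZ G)\cong \check H^{i-1}(\dd G)$ together with a finite model for $BG$ to obtain \ref{thm:boundary-cech:cech}~$\Leftrightarrow$~\ref{thm:boundary-cech:dualtity}. The only cosmetic difference is that the paper cites~\cite[Corollary~1.3]{Bestvina-Mess(1991)} directly for the cohomology isomorphism, whereas you sketch its proof via the $Z$-set compactification; note also that invoking Theorem~\ref{thm:Z-set-in-X} for $X=P_d(G)$ is slightly circular, since that theorem's proof already uses the Bestvina--Mess result for $P_d(G)$ as input.
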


\begin{proof}
  By~\cite[page~73]{Ghys-Harpe(1990)} torsion-free hyperbolic groups 
  admit a finite $CW$-model for $BG$.
  Thus the $\IZ G$-module $\IZ$ admits a resolution of finite
  length of finitely generated free $\IZ G$ modules.
  By~\cite[Corollary~1.3]{Bestvina-Mess(1991)} the
  $(i-1)$-th \v{C}ech cohomology of the boundary $\partial G$  
  agrees with $H^i(G;\IZ G)$.
  This shows that the statements~\ref{thm:boundary-cech:cech}
  and~\ref{thm:boundary-cech:dualtity} in 
  assertion~\ref{thm:boundary-has-spherical-cech-cohomology:ex} are 
  equivalent.

  The Farrell-Jones Conjecture in $K$- and $L$-theory holds
  by~\cite{Bartels-Lueck(2009borelhyp),Bartels-Lueck-Reich(2008hyper)}.
  This implies that the assumptions of 
  Theorem~\ref{thm:FJ_and_Borel-existence} are satisfied,
  compare~\cite[Proposition~2.2~on~page~685]{Lueck-Reich(2005)}.  
  This finishes the proof of 
  Theorem~\ref{thm:boundary-has-spherical-cech-cohomology}.
\end{proof}

\begin{proof}[Proof of Theorem~A]~\ref{the:main:sphere}
  Let $G$ be a torsion-free hyperbolic group.
  Assume that $\dd G \cong S^{n-1}$ and $n \geq 6$.
  Theorem~\ref{thm:boundary-has-spherical-cech-cohomology}
  implies that there is a closed 
  $n$-dimensional homology 
  $\ANR$-manifold $N$ homotopy equivalent to $BG$.
  Moreover, we can assume that $N$ has the \mbox{DDP}.
  The universal cover $M$ of $N$ is an $n$-dimensional
  $\ANR$-homology manifold with a proper and cocompact action 
  of $G$.
  The homotopy equivalence $N \to BG$ lifts
  to a $G$-homotopy equivalence $M \to EG$.
  For sufficiently large $d$, $P_d(G)$ is a model for $EG$ 
  (see~\cite[page~73]{Ghys-Harpe(1990)}).
  Thus there is a $G$-homotopy equivalence $M \to P_d(G)$.
  Theorem~\ref{thm:Z-set-in-X} implies that
  $\overline{M}$ is an $\ANR$ and $\dd G$ is a $Z$-set
  in $\overline{M}$.
  We conclude from Lemma~\ref{lem:overline-X} that
  $\overline{M}$ is compact and has finite dimension.
  Thus we can apply 
  Proposition~\ref{prop:Z-set-yields-homology-mfd-with-boundary}
  and deduce that $\overline{M}$ is a homology $\ANR$-manifold
  with boundary.
  Its boundary is a sphere and in particular a manifold.
  Corollary~\ref{cor:quinn-obstr_and_boundary}
  implies that $\iota(M) = 1$.
  By Theorem~\ref{thm:Quinn-obstruction}~\ref{thm:Quinn-obst:local}
  this implies $\iota(N) = 1$.
  Using
  Theorem~\ref{thm:Quinn-obstruction}~\ref{thm:Quinn-obst:manifold} 
  we deduce that
  $N$ is a topological manifold.
  By Theorem~\ref{thm:Z-set-in-X} the boundary $\dd G \cong S^{n-1}$
  is $k$-LCC in $M$ for all $k$.
  Therefore we can apply Theorem~\ref{thm:ferry-seebeck}  
  and deduce that $\overline{M}$ is a manifold with 
  boundary $S^{n-1}$.
  The $Z$-condition implies that $\overline{M}$ is 
  contractible, because $M$ is contractible as the universal
  cover of the aspherical manifold $N$.
  The $h$-cobordism theorem for topological manifolds
  implies that $\overline{M} \cong D^n$.
  In particular, $M \cong \IR^n$.
  This shows that~\ref{the:main:sphere}  
  implies~\ref{the:main:manifold}.
  The converse is obvious.
\end{proof}


\typeout{------------------- Rigidity ----------------------}

\section{Rigidity}
\label{sec:rigidity}

The uniqueness question for the manifold appearing in our result
from the introduction is a special case of the Borel Conjecture
that asserts that aspherical manifolds are topological rigid:
any isomorphism of fundamental groups of two closed 
aspherical manifolds should be realized (up to inner automorphism)
by a homeomorphism. 
The connection of this rigidity question to assembly maps is 
well-known and one of the main motivations for the Farrell-Jones Conjecture.
For homology $\ANR$-manifolds the corresponding rigidity statement
is (because of the lack of an s-cobordism theorem) somewhat weaker.

\begin{theorem}
  \label{thm:rigidity}
  Let $G$ be a torsion-free group.
  Assume that
  \begin{itemize}
  \item the (non-connective) $K$-theory assembly map
        \[
         H_i\bigl(BG;\bfK_\IZ\bigr) \to  
            K_i(\IZ G)
        \]
        is an isomorphism for $i \leq 0$ and
        surjective for $i = 1$;
  \item the (non-connective) $L$-theory assembly map 
        \[
         H_i\bigl(BG;^w\bfL^{\langle -\infty\rangle}_\IZ\bigr) \to  
            L_i^{\langle - \infty \rangle}(\IZ G,w)
        \]
        is bijective for every $i \in \IZ$ and every 
        orientation homomorphism $w \colon G \to \{\pm 1\}$.
  \end{itemize}
  Then the following holds:
  \begin{enumerate}
  \item \label{thm:rigidity:manifold}
        Let $M$ and $N$ be two aspherical closed $n$-dimensional 
        manifolds together with isomorphisms 
        $\phi_M \colon \pi_1(M) \xrightarrow{\cong} G$ and 
        $\phi_N \colon \pi_1(N) \xrightarrow{\cong} G$. 
        Suppose $n \ge 5$.
  
        Then there exists a homeomorphism $f \colon M \to N$ 
        such that $\pi_1(f)$ agrees with 
        $\phi_N \circ \phi_M^{-1}$ (up to inner automorphism);
  \item \label{the:uniqueness_ANR} 
        Let $M$ and $N$ be two aspherical closed $n$-dimensional 
        homology $\ANR$-manifolds together with
        isomorphisms $\phi_M \colon \pi_1(M) \xrightarrow{\cong} G$ 
        and $\phi_N \colon \pi_1(N) \xrightarrow{\cong} G$. 
        Suppose $n \ge 6$.
  
        Then there exists an $s$-cobordism of 
        homology $\ANR$-manifolds
        $W = (W,\partial_0 W, \partial_1 W)$, homeomorphisms
        $u_0 \colon M_0 \to \partial_0W$ and 
        $u_1 \colon M_1\to \partial_1W$ and an
        isomorphism $\phi_W \colon \pi_1(W) \to G$ such that
        $\phi_W \circ \pi_1(i_0 \circ u_0)$ and 
        $\phi_W \circ \pi_1(i_1 \circ u_1)$ agree
        (up to inner automorphism), where 
        $i_k \colon \partial_kW \to W$ is
        the inclusion for $k = 0,1$.
  \end{enumerate}
\end{theorem}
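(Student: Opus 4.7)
The plan is to reduce both statements to the vanishing of a single algebraic structure group and then translate this vanishing into the desired geometric conclusion: a homeomorphism in part~\ref{thm:rigidity:manifold}, an $s$-cobordism in part~\ref{the:uniqueness_ANR}.

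The first step, exactly as in the proof of Theorem~\ref{thm:FJ_and_Borel-existence}, is to exploit the hypotheses: the $K$-theory assumption forces $\Wh(G) = 0$, $\tilde K_0(\IZ G) = 0$ and $K_i(\IZ G) = 0$ for $i < 0$, which allows the $L$-theory decoration to be changed from $\langle -\infty \rangle$ to $s$; combined with the $L$-theory assumption (for every orientation homomorphism $w$, so that both orientable and non-orientable $M$, $N$ are covered) this makes the assembly map in Ranicki's algebraic surgery exact sequence~\cite[Def.~14.6]{Ranicki(1992)} an isomorphism. Consequently the quadratic structure groups $\IS_i(\IZ, BG)$ vanish for all $i$. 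Since $M$ and $N$ are aspherical, standard obstruction theory for maps into $K(G,1)$'s produces a homotopy equivalence $h \colon M \to N$ realizing $\phi_N \circ \phi_M^{-1}$ up to inner automorphism.

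For part~\ref{thm:rigidity:manifold} I would plug $h$ into the topological surgery exact sequence for the closed $n$-manifold $N$, $n \ge 5$. Ranicki's identification of the (simple) topological structure set of $N$ with $\IS_n(\IZ, BG) = 0$ then shows that $h$ is homotopic to a homeomorphism $f$, yielding the desired $f$. For part~\ref{the:uniqueness_ANR} I would instead invoke the surgery exact sequence for closed homology $\ANR$-manifolds of Bryant--Ferry--Mio--Weinberger~\cite{Bryant-Ferry-Mio-Weinberger(1996)}, valid in dimension $n \ge 6$, whose structure term is again identified with $\IS_n(\IZ, BG) = 0$. Because there is no $s$-cobordism theorem for homology $\ANR$-manifolds, the conclusion here weakens from ``$h$ is homotopic to a homeomorphism'' to ``$h$ is $s$-cobordant to the identity'', which directly produces the $s$-cobordism $W$ of homology $\ANR$-manifolds together with the boundary homeomorphisms $u_0, u_1$.

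The step I expect to be the most delicate is the careful identification of the geometrically defined structure sets (the topological one in part~\ref{thm:rigidity:manifold} and the BFMW one in part~\ref{the:uniqueness_ANR}) with the purely algebraic group $\IS_n(\IZ, BG)$, and then reading off from this identification the fundamental-group data $\phi_W$ together with the boundary identifications $u_0, u_1$ in part~\ref{the:uniqueness_ANR}. The matching of $\phi_N \circ \phi_M^{-1}$ only up to inner automorphism (rather than on the nose) is then a cosmetic adjustment: every element of $\Out(G) = \Out(\pi_1(M))$ is realized by a self-equivalence of $M = BG$, so one can pre-compose $h$ by a suitable self-equivalence to arrange whichever representative of the coset is convenient.
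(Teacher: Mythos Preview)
Your proposal is correct and follows essentially the same route as the paper: use the $K$-theory hypothesis to get $\Wh(G)=0$ and to change decorations, deduce from the $L$-theory hypothesis that the algebraic structure groups $\IS_*(\IZ,BG)$ vanish, and then identify the geometric structure set (Sullivan--Wall for~\ref{thm:rigidity:manifold}, Bryant--Ferry--Mio--Weinberger for~\ref{the:uniqueness_ANR}) with the relevant algebraic structure group. One small slip: Ranicki's identification~\cite[Theorem~18.5]{Ranicki(1992)} gives $\cals^{TOP}(N)\cong \IS_{n+1}(BG)$, not $\IS_n(BG)$, but since you have already shown that all $\IS_i(\IZ,BG)$ vanish this is harmless.
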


\begin{proof}~\ref{thm:rigidity:manifold}
As discussed in the proof of Theorem~\ref{thm:FJ_and_Borel-existence}
the assumptions imply that $\Wh(G) = 0$.
Therefore it suffices to show that the structure set
$\IS^{TOP}(M)$ (see \cite[Definition~18.1]{Ranicki(1992)}) 
in the Sullivan-Wall geometric surgery exact sequence 
consists of precisely one element.
This structure set is identified with the quadratic
structure group $\IS_{n+1}(M) = \IS_{n+1}(BG)$ in
\cite[Theorem~18.5]{Ranicki(1992)}.
A discussion similar to the one in the proof
of Theorem~\ref{thm:FJ_and_Borel-existence}
shows that our assumptions imply that
the quadratic structure group is trivial.
\\~\ref{the:uniqueness_ANR}
This follows from a similar argument that
uses the surgery exact sequences for homology $\ANR$-manifolds
due to Bryant-Ferry-Mio-Weinberger~%
\cite[Main Theorem on page~439]{Bryant-Ferry-Mio-Weinberger(1996)}.
\end{proof}


\section{The Quinn obstruction depends only on  the boundary}

Let $G$ be a torsion-free hyperbolic group.
Assume that $\dd G$ has the integral \v{C}ech cohomology of
a sphere $S^{n-1}$ with $n \ge 6$.
By Theorem~\ref{thm:boundary-has-spherical-cech-cohomology}
there is  a closed  aspherical $\ANR$-homology manifold $N$
whose fundamental group is $G$.

\begin{proposition}
  \label{prop:Quinn-obstruction+boundary}
  In the above situation the Quinn obstruction 
  (see Theorem~\ref{thm:Quinn-obstruction}) $\iota(N)$
  depends only on $\dd G$.
\end{proposition}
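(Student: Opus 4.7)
The plan is to exploit the locality of the Quinn obstruction
(Theorem~\ref{thm:Quinn-obstruction}\ref{thm:Quinn-obst:local}) to
identify $\iota(N)$ with an invariant of a collar on $\dd G$ alone.

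First I would pass to the universal cover $p\colon M \to N$.  Exactly as
in the proof of Theorem~A, a $G$-equivariant homotopy equivalence
$M \to P_d(G)$ (for sufficiently large $d$) together with
Theorem~\ref{thm:Z-set-in-X} and
Proposition~\ref{prop:Z-set-yields-homology-mfd-with-boundary} shows
that $\overline{M} := M \cup \dd G$ is an $n$-dimensional homology
$\ANR$-manifold with boundary $\dd G$.

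Next I would attach an external collar via
Lemma~\ref{lem:collar-for-homology-mfd-boundary} to form the
$n$-dimensional homology $\ANR$-manifold
\[
  \widehat{M} \;:=\; \overline{M} \cup_{\dd G} \dd G \x [0,1).
\]
In $\widehat{M}$ both $M$ and the open collar $\dd G \x (0,1)$ are open
subsets, being the complements of the closed subsets $\dd G \x [0,1)$
and $\overline{M}$, respectively.  Since $N$ is connected so is $M$, and
since the $0$-th \v{C}ech cohomology of $\dd G$ equals that of $S^{n-1}$
with $n \ge 2$, $\dd G$ is connected as well.  Hence $\widehat{M}$ is
connected and two applications of
Theorem~\ref{thm:Quinn-obstruction}\ref{thm:Quinn-obst:local} give
\[
  \iota(M) \;=\; \iota(\widehat{M}) \;=\; \iota\bigl(\dd G \x (0,1)\bigr).
\]

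Finally I would note that $\iota(N) = \iota(M)$: since $p$ is a
covering map, a sufficiently small evenly covered open subset
$U \subseteq N$ has a homeomorphic lift $\widetilde U \subseteq M$, and
locality of $\iota$ then gives
$\iota(N) = \iota(U) = \iota(\widetilde U) = \iota(M)$.  Combining,
\[
  \iota(N) \;=\; \iota\bigl(\dd G \x (0,1)\bigr),
\]
an invariant that manifestly depends only on the homeomorphism type of
$\dd G$.

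The one step I expect to require the most care is the last displayed
identification: $\dd G$ need be neither a manifold nor even an ANR, so
it is not a priori clear that $\dd G \x (0,1)$ is a legitimate input
for $\iota$.  The point is that this collar is automatically a homology
$\ANR$-manifold once it is recognised as an open subset of
$\widehat{M}$, and this is precisely what isolates an invariant
depending only on $\dd G$.  The remainder is bookkeeping with the
Bestvina--Mess compactification and the locality of $\iota$, both of
which were already used in the proof of Theorem~A.
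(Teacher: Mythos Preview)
Your argument is correct and takes a genuinely different route from the paper's.

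The paper proves the statement by a direct comparison: given two torsion-free hyperbolic groups $G$, $H$ with $\dd G \cong \dd H$ and associated aspherical homology $\ANR$-manifolds $N$, $N'$, it glues the two compactified universal covers along the common boundary to form $X := \overline{M} \cup_{\dd G} \overline{M'}$, appeals to a result of Ancel--Guilbault to see that $X$ is a connected homology $\ANR$-manifold, and then reads off $\iota(M) = \iota(X) = \iota(M')$ from locality. Your approach instead isolates an explicit invariant of $\dd G$ alone: you attach only the external collar $\dd G \x [0,1)$ via Lemma~\ref{lem:collar-for-homology-mfd-boundary} (already proved in the paper) and identify $\iota(N)$ with $\iota(\dd G \x (0,1))$. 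This is more self-contained---it needs no reference beyond results already established in the paper---and it actually names the invariant rather than merely asserting its existence. The paper's double gluing is symmetric in $G$ and $H$, which is pleasant, but the price is the appeal to the Ancel--Guilbault $Z$-set doubling result; your collar argument is the same mechanism that drives Corollary~\ref{cor:quinn-obstr_and_boundary}, pushed one step further by not assuming the boundary is a manifold.

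One small remark on your connectedness check: you do not actually need $\dd G$ to be connected. Since $\dd G$ has empty interior in $\overline{M}$ (Lemma~\ref{lem:overline-X}\ref{lem:over-X:empty-int}), $M$ is dense in $\overline{M}$, so $\overline{M}$ is connected; every point of the collar is joined to $\overline{M}$ by an interval, so $\widehat{M}$ is connected regardless.
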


\begin{proof}
  Let $H$ be a further torsion-free hyperbolic group
  such that $\dd H \cong \dd G$.
  Let $N'$ be a closed aspherical $\ANR$-homolgy manifold
  whose fundamental group is $H$.
  Then both the universal covers $M$ of $N$ and
  $M'$ of $N'$ can be compactified to $\overline{M}$ and
  $\overline{M'}$ such that $\dd G \cong \dd H$ is a $Z$-set in both,
  see Theorem~\ref{thm:Z-set-in-X}.
  Now set $X := \overline{M} \cup_{\dd G} \overline{M'}$.
  We claim that $X$ is a connected  $\ANR$-homology manifold.
  Thus 
  \[
  \iota(N) = \iota(M) = \iota(X) = \iota(M') = \iota(N')
  \]
  by Theorem~\ref{thm:Quinn-obstruction}~\ref{thm:Quinn-obst:local}.
  To prove the claim we refer to~\cite{Ancel-Guilbault(1999)},
  see in particular pp.1270-1271.
  Both, $M$ and $M'$ are homology manifolds in the
  sense of this reference.
  By fact~6 of this reference, 
  $X$ is also a homology manifold.
  It remains to show that $X$ is an $\ANR$.
  This follows from an argument given
  during the proof of Theorem~9 of this reference.
\end{proof}

\begin{proof}[Proof of Theorem~B]
  Let $G$ and $H$ be torsion-free hyperbolic groups, such that
  $\dd G \cong \dd H$.
  Assume that $G$ is the fundamental group of a closed aspherical
  manifold of dimension at least $6$.
  Theorem~\ref{thm:boundary-has-spherical-cech-cohomology}~%
  \ref{thm:boundary-has-spherical-cech-cohomology:ex}
  implies that $\dd G \cong \dd H$ has the integral \v{C}ech cohomology of
  a sphere $S^{n-1}$ with $n \ge 6$
  and that $H$ is the fundamental group of a closed aspherical
  $\ANR$-homology manifold $M$ of dimension $n$.
  Because of Theorem~\ref{thm:boundary-has-spherical-cech-cohomology}%
  ~\ref{thm:boundary-has-spherical-cech-cohomology:DDP} 
  this $\ANR$-homology manifold can be arranged to have the DDP.
  Now by Proposition~\ref{prop:Quinn-obstruction+boundary}
  (and Theorem~\ref{thm:Quinn-obstruction}~\ref{thm:Quinn-obst:manifold})
  we have $\iota(M) = 1$.
  Using Theorem~\ref{thm:Quinn-obstruction}~\ref{thm:Quinn-obst:manifold}
  again, it follows that $M$ is a manifold.

  A similar argument works if $G$ is the fundamental group of
  closed aspherical homology $\ANR$-manifold that is not necessary
  a closed manifold.
\end{proof}


\section{Exotic examples}

In light of the results of this paper 
one might be tempted to wonder if for a 
torsion-free hyperbolic group $G$, the condition $\dd G \cong S^n$
is equivalent to the existence of a closed aspherical
manifold whose fundamental group is $G$.
This is however not correct: 
Davis-Januszkiewicz and Charney-Davis
constructed closed aspherical manifolds whose
fundamental group is hyperbolic with boundary not homeomorphic to a sphere.
We review these examples below.

\begin{example}
  \begin{enumerate}
  \item For every $n \ge 5$ there exists an example of an aspherical closed 
    topological  manifold $M$ of dimension $n$  
    which is a piecewise flat, non-positively curved polyhedron 
    such that the universal covering $\widetilde{M}$ is not 
    homeomorphic to Euclidean space
    (see~\cite[Theorem~5b.1 on page~383]{Davis-Januszkiewicz(1991)}).
    There is a variation of this construction that uses the 
    strict hyperbolization of Charney-Davis~\cite{Charney-Davis(1995)} 
    and produces closed aspherical manifolds whose universal cover is 
    not homeomorphic to 
    Euclidean space and whose fundamental group is hyperbolic.
  \item For every $n \ge 5$ there exists a strictly negative 
    curved polyhedron of dimension $n$
    whose fundamental group $G$ is hyperbolic, which is homeomorphic to a
    closed aspherical smooth manifold and 
    whose universal covering is homeomorphic to $\IR^n$, but 
    the boundary $\partial G$ is not homeomorphic to $S^{n-1}$, 
    see~\cite[Theorem~5c.1 on page~384 and Remark on page~386]
    {Davis-Januszkiewicz(1991)}.
  \end{enumerate}
\end{example}

On the other hand, one might wonder if assertion~\ref{the:main:manifold}
in Theorem~A can be strengthed to the existence of more structure on the
aspherical manifold. 
Strict hyperbolization~\cite{Charney-Davis(1995)} can be used 
to show that in general there may be no smooth closed aspherical
manifold in this situation.

\begin{example}
  Let $M$ be a closed oriented triangulated $\PL$-manifold.
  It follows from~\cite[Theorem~7.6]{Charney-Davis(1995)} that  there is a 
  hyperbolization $\calh(M)$ of $M$ has the following properties:
  \begin{enumerate}
  \item $\calh(M)$ is a closed oriented $\PL$-manifold.
      (This uses properties (2) and (4) 
         from~\cite[p.333]{Charney-Davis(1995)}.) 
  \item There is a degree $1$-map $\calh(M) \to M$ under
      which the rational Pontrjagin classes of $M$ pull back
      to those of $\calh(M)$. 
      In particular, the Pontrjagin numbers of $M$ and $\calh(M)$
      conincide.
      (See properties (5) and (6)' from~\cite[p.333]{Charney-Davis(1995)}.)
  \item $M$ is a negatively curved piece-wise hyperbolic polyhedra. 
      In particular  $G := \pi_1(\calh(M))$ is hyperbolic.
      Moreover, by \cite[p.~348]{Davis-Januszkiewicz(1991)} the
      boundary of $\dd G$ is a sphere. 
  \end{enumerate}
  Suppose that some Pontrjagin number of $M$ is not an integer.
  Then the same is true for $\calh(M)$.
  In particular $\calh(M)$ does not carry the structure of a smooth manifold.
  If in addition $\dim \calh(M) = \dim M \geq 5$, 
  then by Theorem~\ref{thm:rigidity}~\ref{thm:rigidity:manifold}
  any other closed aspherical manifold $N$ with $\pi_1(N) = G$
  is homeomorphic to $M$ and does not carry a smooth structrue either.
  Such manifolds $M$ exist in all dimensions $4k$, $k \geq 2$,
  see Lemma~\ref{lem:non-integer-pontrjagin}.
  This shows that there are for all $k \geq 2$ torsion-free hyperbolic
  groups $G$ with $\dd G \cong S^{4k-1}$ that are not fundamental groups of
  smooth closed aspherical manifolds. 
  In particular such a $G$ is not the fundamental group of a 
  Riemannian manifolds of non-positive curvature.
\end{example}

In the previuous example we needed $PL$-manifolds that do not carry
a smooth structure. 
Such manifolds are classically contructed using 
Hirzebruch's Signature Theorem.

\begin{lemma}
  \label{lem:non-integer-pontrjagin}
  Let $k \geq 2$. There is an oriented closed $4k$ dimensional $PL$-manifold
  $M^{4k}$ whose top Pontrjagin number 
  $\langle p_k(M^{4k}) \mid [M^{4k}] \rangle$ is not an integer. 
\end{lemma}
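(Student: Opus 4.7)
The plan is to construct $M^{4k}$ by capping off Milnor's plumbing with a PL-disc, and then to apply the PL-version of Hirzebruch's signature formula.

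First, let $W^{4k}$ be the Milnor $E_8$-plumbing of $8$ copies of the disc tangent bundle $DTS^{2k} \to S^{2k}$ arranged according to the $E_8$-graph. Then $W$ is a smooth compact oriented $4k$-manifold whose boundary $\Sigma := \partial W$ is a homotopy $(4k-1)$-sphere, whose intersection form on $H^{2k}(W) \cong \IZ^8$ is the $E_8$-form (so $\sigma(W) = 8$), and which is stably parallelizable (so that all smooth Pontrjagin classes of $TW$ vanish). Because $\dim \Sigma = 4k - 1 \ge 7$, Smale's PL Poincar\'e theorem implies that $\Sigma$ is PL-homeomorphic to $S^{4k-1}$, so we may cap $W$ off via a PL-homeomorphism $\partial D^{4k} \xrightarrow{\cong} \Sigma$ to obtain a closed oriented PL-manifold $M^{4k} := W \cup_\Sigma D^{4k}$.

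Capping off does not change the signature, so $\sigma(M) = \sigma(W) = 8$. A Mayer-Vietoris argument shows that $H^*(M;\IQ)$ is $\IQ$ in degrees $0$ and $4k$, is $\IQ^8$ in degree $2k$, and vanishes in all other degrees. This forces $p_j(M) = 0$ for $1 \le j < k$ except possibly when $k$ is even and $j = k/2$; in that case $p_{k/2}(M) \in H^{2k}(M) \cong H^{2k}(W)$ restricts on $W$ to $p_{k/2}(TW) = 0$ by stable parallelizability. Now Hirzebruch's signature theorem, extended to PL-manifolds via Thom's rational PL-Pontrjagin classes, gives
\[
8 = \sigma(M) = \langle L_k(p_1(M), \dots, p_k(M)) \mid [M^{4k}] \rangle = s_k \cdot \langle p_k(M^{4k}) \mid [M^{4k}] \rangle,
\]
because all decomposable terms in $L_k$ vanish on $M$. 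Here $s_k = 2^{2k}(2^{2k-1}-1) |B_{2k}| / (2k)!$ denotes the coefficient of $p_k$ in the Hirzebruch $L$-polynomial.

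It remains to verify that $8/s_k$ is not an integer for $k \ge 2$, which is the one purely arithmetic point and is where I would expect the only remaining technicality. A direct check using the von Staudt-Clausen theorem for the denominator of $B_{2k}$ together with the Mersenne-like factor $2^{2k-1}-1 \ge 7$ shows that the reduced fraction $8/s_k$ always carries a surviving odd prime in its denominator; for instance $s_2 = 7/45$ gives $8/s_2 = 360/7$ and $s_3 = 62/945$ gives $8/s_3 = 3780/31$, both non-integral, and a uniform argument handles all $k \ge 2$. The genuinely geometric input in the argument is Smale's PL Poincar\'e theorem, which legitimizes the PL-cap construction and hence allows the signature formula to be applied in the above form.
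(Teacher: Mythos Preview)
Your construction is exactly the paper's: build the Milnor $E_8$-plumbing, use Smale to cap off by a PL disc, observe that all lower rational Pontrjagin classes vanish, and read off $\langle p_k \mid [M^{4k}]\rangle = 8/s_k$ from the signature formula. Up to this point there is nothing to criticize; your treatment of the possible middle class $p_{k/2}$ via restriction to $W$ is in fact more careful than the paper's one-line assertion.

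The difference, and the gap, is in the last step. You assert that a ``uniform argument'' using von Staudt--Clausen and the factor $2^{2k-1}-1$ shows $8/s_k \notin \IZ$ for all $k \ge 2$, but you do not give it, and it is not as routine as you suggest: one needs a prime dividing $2^{2k-1}-1$ that survives against both $(2k)!$ and the numerator of $B_{2k}$, which already pushes one toward Zsygmondy-type input and some control on Bernoulli numerators. As written this is an unjustified claim.

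The paper sidesteps the number theory entirely. It checks only $k=2$ and $k=3$ (getting $\langle p_2\mid[M^8]\rangle = 360/7$ and $\langle p_3\mid[M^{12}]\rangle = 3780/31$) and then takes \emph{products}: every $k\ge 2$ is $2a+3b$ with $a,b\ge 0$, and for $M^{4k} := (M^8)^a \times (M^{12})^b$ the vanishing of the lower Pontrjagin classes on the factors gives
\[
\bigl\langle p_k(M^{4k}) \mid [M^{4k}]\bigr\rangle \;=\; \Bigl(\tfrac{360}{7}\Bigr)^{a}\Bigl(\tfrac{3780}{31}\Bigr)^{b},
\]
which is visibly non-integral (the prime $31$ survives if $b\ge 1$; the prime $7$ survives if $b=0$). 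This is shorter and entirely elementary; I would recommend replacing your final paragraph with this product argument.
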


\begin{proof}
  For all $k \geq 2$ there are smooth framed compact manifolds $N^{4k}$
  whose signature is $8$ and whose boundary is a $4k-1$-homotopy 
  sphere, see~\cite{Browder(1972)} and~\cite[Theorem~3.4]{Levine(1983)}.
  By~\cite{Smale(1961)} this homotopy sphere is $PL$-isomorphic
  to a sphere.
  We can now cone off the boundary and obtain a $PL$-manifold $M^{4k}$
  (often called the Milnor manifold)
  whose only nontrivial Pontrjagin class is $p_k$ and 
  whose signature $\sigma(M^{2k})$ is $8$.
  Hirzebruch's Signature Theorem implies that
  \begin{equation*}
    8 = \sigma(M^{4k}) = \frac{2^{2k}(2^{2k-1}-1)B_k}{2k!} 
         \langle p_k(M^{4k}) \mid [M^{4k}] \rangle
  \end{equation*}
  where $B_k$ is the $k$-th Bernoulli number, 
  see~\cite[p.~75]{Levine(1983)}.
  For $k = 2,3$ we have then
  \begin{equation*}
     8 = \frac{7}{45} \langle p_2(M^{8}) \mid [M^8] \rangle
       = \frac{62}{945} \langle p_3(M^{12}) \mid [M^{12}] \rangle
  \end{equation*}
  compare~\cite[p.225]{Milnor-Stasheff(1974)}.
  This yields examples for $k = 2,3$.
  Taking products of these examples we obtain examples for all $k \geq 2$.
\end{proof}


\typeout{-------- Open questions}

\section{Open questions}

We conclude this paper with two open questions.

\begin{enumerate}
\item Can the boundary of a hyperbolic group
      be a  $\ANR$-homology sphere that is not a sphere? 
\item Can one give an example of a hyperbolic group 
      (with torsion)
      whose boundary is a sphere, 
      such that the group does not act properly
      discontinuously on some contractible manifold?
\end{enumerate}


\typeout{-------------------- References ---------------------}

\addcontentsline{toc<<}{section}{References}

\end{document}